 \definecolor{LemonChiffon}{rgb}{100, 98, 80}
\definecolor{myblue}{rgb}{0,0.4,0.8}
\definecolor{orange}{rgb}{1, 0.4, 0}
\definecolor{mygreen}{rgb}{0, 0.8, 0.2}
\definecolor{myred}{rgb}{204, 0, 0}
\definecolor{violet}{RGB}{0.4,0.2,1}
\definecolor{brown}{rgb}{0.6, 0.4, 0}
\newcounter{statement}
\newcommand{\statement}[2]{%
	\begin{equation}\refstepcounter{statement}\tag{S\thestatement}\label{stm:#1}%
		\parbox{\dimexpr\linewidth-4em}{#2}%
	\end{equation}%
}
\theoremstyle{plain}
\newtheorem{theorem}{Theorem}[section]
\newtheorem{lemma}[theorem]{Lemma}
\newtheorem{prop}[theorem]{Proposition}
\newtheorem{prb}[theorem]{Problem}
\newtheorem{question}[theorem]{Question}
\theoremstyle{definition}
\newtheorem{defi}[theorem]{Definition}
\newtheorem{exm}[theorem]{Example}
\tikzstyle{vertex}=[circle, draw, inner sep=0pt, minimum size=3pt] 
\newcommand{\vertex}{\node[vertex]}
\begin{document}

\begin{frontmatter}

 \title{On Toeplitz graphs being line graphs}

\author[label2]{Gi-Sang Cheon}
\ead{gscheon@skku.edu}
\author[label4]{Bumtle Kang\corref{cor1}}
\ead{thelokbt@ksa.kaist.ac.kr}
\cortext[cor1]{Corresponding author}
\author[label2,label3]{Suh-Ryung Kim}
\ead{srkim@snu.ac.kr}
\author[label1]{Seyed Ahmad Mojallal}
\ead{ahmad\_mojalal@yahoo.com}
\author[label3]{Homoon Ryu}
\ead{ryuhomun@naver.com}

\address[label1]{Department of Mathematics and Statistics, University of Regina, Regina, Saskatchewan, S4S0A2, Canada}
\address[label2]{Applied Algebra and Optimization Research Center, Department of Mathematics, Sungkyunkwan University, Suwon $16419$, Republic of Korea}
\address[label3]{Department of Mathematics Education, Seoul National University, Seoul $08826$, Republic of Korea}
\address[label4]{Department of Mathematics and Computer Science, Korea Science Academy of KAIST, Busan $47162$, Republic of Korea}

\begin{abstract}

 A Toeplitz graph $T_n \langle t_1,t_2,\ldots,t_k\rangle$ is a simple graph with the vertex set $[n]$ such that two vertices $v$ and $w$ are adjacent if and only if $|v-w| = t_i$ for some $i \in [k]$.
    In this paper, we investigate line Toeplitz graphs, which are Toeplitz graphs that happen to be line graphs.
  We first show that for a sufficiently large $n$, the family of claw-free Toeplitz graphs of order $n$  is $T_n \langle t,2t,\ldots,kt\rangle$ for some nonnegative integers $t$ and $k$.
  Interestingly, this family consists of a union of Toeplitz graphs each of which is isomorphic to a $k$-tree the notion of which was introduced by Patil in 1986.
    Then we completely characterize $T_n \langle t,2t,\ldots,kt\rangle$ for any positive integer $n$ that is a line graph.
  Furthermore, we provide a comprehensive description of a line Toeplitz graph $T_n \langle t_1,t_2\rangle$ and $T_n \langle t_1,t_2,t_3\rangle$.
  In general, line Toeplitz graph seems very challenging to characterize completely. Even for $T_n \langle t_1,t_2,t_3\rangle$, it was not easy to do so.
  It is also worth mentioning that there is a line Toeplitz graph that is  not in the form $T_n \langle t,2t,3t\rangle$.
\end{abstract}

\begin{keyword}
Toeplitz graphs \sep Claw-free graphs \sep Line graphs \sep  Chordal graphs
\sep $k$-tree
\MSC 05C75 \sep 05C76
\end{keyword}

\end{frontmatter}

 \section{Introduction}

 A {\it Toeplitz graph} is a simple graph whose adjacency matrix exhibits the Toeplitz property, meaning that the $(0,1)$-pattern in the matrix remains the same along both rows and columns.
According to the characteristics of the Toeplitz property, we can denote a Toeplitz graph with $n$ vertices by $T_n\langle t_1,\ldots, t_k\rangle$
 if the $1$'s in the first row of its adjacency matrix are placed at positions $1+t_1$, \ldots, $1+t_k$ with $1\le t_1<\ldots <t_k<n$.
  In addition, we label the vertices of the Toeplitz graph with $1,\ldots,n$ so that the $i$th row of its adjacency matrix corresponds to the vertex labeled $i$.

  Toeplitz graphs have been first investigated with respect to hamiltonicity by van Dal {\it et al.} in 1996.
 \cite{DTT} (see also \cite{Heu,MQ,MZ}). Since then some properties of Toeplitz graphs have been studied by several authors. Indeed, bipartite Toeplitz
 graphs have been fully characterized in terms of bases and circuits by Euler {\it et al.}~\cite{Eul,ELZ}. Colouring aspects are especially treated in \cite{Heu2,KM,NP}.
 Planar Toeplitz graphs have been also fully characterized by Euler \cite{Eul3,EZ} providing, in particular, a complete description of the class of 3-colourable such
 graphs.
 Toeplitz graphs have also various applications in different fields. For instance, Toeplitz graphs can be used to model the structure of the data and extract features in time series or image data in signal processing. This helps in understanding and analyzing the periodicity or localized patterns in the signals. For example, see \cite{CGSXZ,CHL}

 It is also known \cite{EvL1,EvL2} that the line graph in graph theory is an important tool in network analysis. Creating the line graph of a given graph connects the edges of the original graph as vertices in the new graph. This allows for investigating and analyzing the network structure of the original graph.
Formally, the {\it line graph} $G$ of a graph $H$ has the edge set of $H$ as its vertex set and two vertices are adjacent in $G$ if and only if their corresponding edges are adjacent in $H$.

In this paper, we research the Toeplitz graph, which is a line graph, in the sense of combining two graphs that have various applications and as  a follow-up to Mojallal {\it et al.}~\cite{CJKKM}.

\section{Preliminaries}

 A {\it chordal graph} is a simple graph with no chordless cycle of length at least $4$ as an induced subgraph.
  A graph is an {\it interval graph} if it captures the intersection relation for some set of intervals on the real line.
  A {\it clique} is a complete subgraph or a subset of vertices of an undirected graph such that every two distinct vertices in the clique are adjacent. The {\it clique number} $\omega(G)$ of a graph $G$ is the maximum number of vertices among the cliques of $G$.
Throughout the paper, we denote the $n$-set $\{1,\ldots,n\}$ by $[n]$.

Very recently, Mojallal {\it et al.}~\cite{CJKKM} studied structures of Toeplitz graphs and obtained the following interesting results.
 \begin{lemma}[\cite{CJKKM}]\label{chordal}
 	Let $G=T_n\langle t_1, \ldots, t_k\rangle$. If
 	$n \ge t_{k-1}+t_k$, then the following statements are equivalent.
 	\begin{itemize}
 		\item[{\rm (i)}] $G$ is interval;
 		\item[{\rm (ii)}] $G$ is chordal;
 		\item[{\rm (iii)}] $t_i=it_1$ for all $i\in [k]$;
 		\item[{\rm (iv)}] $\omega(G)=k+1$.
 	\end{itemize}
 \end{lemma}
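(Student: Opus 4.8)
The plan is to prove the two easy implications directly and to close the cycle $\text{(i)}\Rightarrow\text{(ii)}\Rightarrow\text{(iii)}\Rightarrow\text{(i)}$ while establishing $\text{(iii)}\Leftrightarrow\text{(iv)}$ as a self-contained lemma. The implication $\text{(i)}\Rightarrow\text{(ii)}$ is free, since every interval graph is chordal. For $\text{(iii)}\Rightarrow\text{(i)}$ I would observe that when $t_i=it_1$ we have $D:=\{t_1,\ldots,t_k\}=\{t_1,2t_1,\ldots,kt_1\}$, so two vertices $u,v$ are adjacent iff $u\equiv v\pmod{t_1}$ and $0<|u-v|\le kt_1$. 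Thus $G$ decomposes, over the residue classes modulo $t_1$, into vertex-disjoint copies of the $k$-th power of a path; each such power of a path is an interval graph, and a disjoint union of interval graphs is an interval graph.

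For $\text{(iii)}\Leftrightarrow\text{(iv)}$ the key observation is that $\omega(G)\le k+1$ always holds: in any clique $v_0<v_1<\cdots<v_r$ the $r$ differences $v_i-v_0$ $(1\le i\le r)$ are distinct elements of $D$, forcing $r\le k$. If $\text{(iii)}$ holds, then $\{1,1+t_1,\ldots,1+kt_1\}$ is a clique of size $k+1$ (all pairwise differences are of the form $(i-j)t_1=t_{i-j}\in D$, and it fits since $t_k<n$), so $\omega(G)=k+1$. Conversely, a clique of size $k+1$ must, by the counting above, realize every element of $D$ as a difference from its least vertex $v_0$; hence $v_i=v_0+t_i$ and so $t_i-t_j\in D$ for all $i>j$. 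A short induction then gives $t_i=it_1$: assuming $t_m=mt_1$ for $m<i$, the value $t_i-t_1\in D$ is strictly below $t_i$, hence equals some $\ell t_1\le(i-1)t_1$, and $t_i>t_{i-1}=(i-1)t_1$ forces $\ell=i-1$.

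The crux is $\text{(ii)}\Rightarrow\text{(iii)}$, which I would prove by contraposition: assuming $\neg\text{(iii)}$ I must exhibit an induced cycle of length at least $4$. The main tool is the \emph{parallelogram}: for $a,b\in D$ the four vertices $x,\,x+a,\,x+b,\,x+a+b$ carry exactly the two edges of difference $a$ and the two of difference $b$, forming a $4$-cycle whose diagonals have differences $a+b$ and $b-a$; this $C_4$ is induced precisely when $a+b\notin D$ and $b-a\notin D$. Now $\neg\text{(iii)}$ is equivalent to the existence of $i>j$ with $t_i-t_j\notin D$, and the clean way to also kill the sum is to take $b=t_k$, so that $a+b>t_k$ is automatically outside $D$. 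Hence, whenever some $j$ satisfies $t_k-t_j\notin D$, the parallelogram on $\{x,x+t_j,x+t_k,x+t_j+t_k\}$ is an induced $C_4$, and the hypothesis $n\ge t_{k-1}+t_k$ leaves room to place it inside $\{1,\ldots,n\}$.

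The residual (and genuinely hard) case is when $t_k-t_j\in D$ for every $j$; a short computation shows this is exactly the palindromic condition $t_j+t_{k-j}=t_k$, under which \emph{no} parallelogram is chordless, so one is forced to build a longer hole. Here I would take the first index $m$ with gap $g_m:=t_{m+1}-t_m\neq t_1$ and construct an explicit induced cycle from $t_1$-steps closed off by a longer chord, splitting into the subcases $g_m>t_1$ and $g_m<t_1$; the behaviour already differs at small scale (for instance $T_n\langle1,3,4\rangle$ needs the induced $4$-cycle $\{0,1,2,3\}$ rather than a parallelogram, while $T_n\langle2,3,5\rangle$ admits no induced $C_4$ at all and requires the $5$-cycle $0\!-\!2\!-\!4\!-\!6\!-\!3\!-\!0$). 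Verifying chordlessness of these cycles and checking that they fit within the range permitted by $n\ge t_{k-1}+t_k$ is where I expect the real work — and the main obstacle — to lie; the rest of the argument is essentially bookkeeping.
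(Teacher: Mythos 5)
This lemma is imported by the paper from \cite{CJKKM} with no proof given, so your attempt has to stand on its own. The parts you actually carry out are sound: (i)$\Rightarrow$(ii) is standard; (iii)$\Rightarrow$(i) via the residue-classes-mod-$t_1$ decomposition into $k$-th powers of paths is correct (it is essentially Proposition~\ref{lem1} plus the fact that powers of paths are interval graphs); and your (iii)$\Leftrightarrow$(iv) argument --- $\omega(G)\le k+1$ because the differences from the least vertex of a clique are distinct members of $D$, and a clique of size $k+1$ forces $t_i-t_j\in D$ for all $i>j$, whence $t_i=it_1$ by induction --- is complete.

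The genuine gap is exactly the step you flag yourself: (ii)$\Rightarrow$(iii). Your parallelogram with $b=t_k$ settles only the case where $t_k-t_j\notin D$ for some $j$, and even there it has a boundary failure: placing $\{x,\,x+t_j,\,x+t_k,\,x+t_j+t_k\}$ inside $[n]$ requires $n\ge t_j+t_k+1$, so when the only admissible index is $j=k-1$ the hypothesis $n\ge t_{k-1}+t_k$ is one vertex short. A concrete instance is $D=\{3,5,6\}$ with $n=11$: here $t_3-t_1=3\in D$ and only $t_3-t_2=1\notin D$, so your scheme produces nothing (the lemma is still true --- $\{1,4,6,9\}$ is an induced $C_4$ --- but it is a parallelogram on $t_1,t_2$, which your ``take $b=t_k$'' recipe never considers). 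More seriously, in the residual palindromic case $t_j+t_{k-j}=t_k$ you give no construction at all: only the intention to take the first gap $t_{m+1}-t_m\ne t_1$, build a longer cycle of $t_1$-steps, split into subcases, and ``verify chordlessness,'' which you explicitly identify as where the real work lies. That case is the entire content of the implication (your own example $T_n\langle 2,3,5\rangle$, needing an induced $C_5$, shows it cannot be dispatched by $C_4$'s), and without it your argument establishes only (iii)$\Leftrightarrow$(iv) and (iii)$\Rightarrow$(i)$\Rightarrow$(ii), not the full equivalence. Incidentally, your side claim that under the palindromic condition \emph{no} parallelogram is chordless is false: $D=\{1,2,4,5,6\}$ is palindromic, yet the parallelogram with $a=2$, $b=5$ has diagonal differences $7$ and $3$, both outside $D$, hence is an induced $C_4$; this does not hurt your argument, but it shows the case split you propose is not the right dividing line.
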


 Furthermore, a Toeplitz graph satisfying the condition (iii) of  Lemma~\ref{chordal} has a noteworthy structure described as follows.
\begin{prop}[\cite{CJKKM}]\label{lem1} For positive integers $k,t$, let $G=T_n\langle t, 2t,\ldots, kt\rangle$.
	Then $G$ has $t$ components. In particular, if $H_1, \ldots, H_t$ are the components of $G$, then $H_i$ is isomorphic to the
	graph $T_{\lfloor(n-i)/t\rfloor+1}\langle 1, 2,\ldots, k\rangle$ and the vertex set of $H_i$
	is $\{i+st\;|\; s=0,1,\ldots, \lfloor (n-i)/t\rfloor\}$ for each $i\in
	[t]$.
\end{prop}
 This proposition tells us that each component of $T_n\langle t, 2t,\ldots, kt\rangle$ is a lump of consecutive cliques of size $k+1$ sharing $k$ vertices.
 As a matter of fact, this graph can be described in terms of $k$-trees whose notion was introduced by Patil.

\begin{defi}[\cite{PH1}]\label{def:k} The family of $k$-trees (or $K_k$-trees) is the set of all graphs that can be obtained by the following recursive construction procedure:
  \begin{itemize}
  \item[{\rm 1.}] The complete graph $K_k$ of order $k$ is the smallest $k$-tree.
  \item[{\rm 2.}] To a $k$-tree $G$ with $n-1$ vertices for $n\ge k+1$, add a new vertex and make it adjacent to any $k$ mutually adjacent vertices of $G$, so that the resulting $k$-tree is of order $n$.
  \end{itemize}
    \end{defi}
 Note that a component of $T_n\langle t, 2t,\ldots, kt\rangle$ is $H_i=T_{\lfloor(n-i)/t\rfloor+1}\langle 1, 2,\ldots, k\rangle$ given in Proposition~\ref{lem1} for some $i \in [t]$.
 It can easily be checked that $H_i$ is obtained from $K_{k+1}$ by adding the vertex $(k+1)+j$ so that $\{1+j,\ldots,(k+1)+j\}$ forms a clique at the $j$th step for each $1\le j \le \lfloor (n-i)/t\rfloor-k$ and thus, by definition, $H_i$ is a $(k+1)$-tree.
 Therefore $T_n\langle t, 2t,\ldots, kt\rangle$ is a union of $(k+1)$-trees.

For example, we take $G:=T_{30} \langle 5,10,15 \rangle$.
As shown in Figure~\ref{fig:Toeplitz1}, $G$ consists of five $4$-trees of order $6$.
Moreover, the adjacency matrix of each tree is simultaneous permutation equivalent to the $6 \times 6$ Toeplitz matrix
 \[
 \begin{tikzpicture}[baseline={([yshift=-1ex]current bounding box.center)}]
 \matrix[matrix of math nodes,left delimiter=(,right delimiter=).] (M)
 {  0 & 1 & 1 & 1 &  0 & 0  \\
 1 & 0 & 1 & 1 &  1 & 0   \\
 1 & 1 & 0 & 1 &  1 & 1 \\
 1 & 1 & 1 & 0 &  1 & 1 \\
 0 & 1 & 1 & 1 &  0 & 1 \\
 0 & 0 & 1 & 1 &  1 & 0 \\
 };
 \node[draw,fit=(M-1-1) (M-4-4),inner sep=0pt]{};
 \node[draw,fit=(M-2-2) (M-5-5),inner sep=0pt]{};
 \node[draw,fit=(M-3-3) (M-6-6),inner sep=0pt]{};
  \end{tikzpicture}
 \]

 We observe that the diagonal blocks are $J_4 - I_4$ and two consecutive blocks overlap up to $J_3-I_3$. In addition, the adjacency matrix of $G$ is the direct sum of five of this $6 \times 6$ Toeplitz matrix.
 We generalize this idea of describing the adjacency matrix of $G$ into the  adjacency matrix of  $T_n\langle t, 2t,\ldots, kt\rangle$. We define a matrix, denoted by $A_i(n,k,t)$, for each $i \in [t]$ as follows:\begin{itemize}  \item[(1)] the order of  $A_i(n,k,t)$ is $\lfloor(n-i)/t\rfloor+1$;
  \item[(2)] the diagonal blocks of $A_i(n,k,t)$ are $J_{k+1} - I_{k+1}$, two consecutive blocks overlap up to $J_{k}-I_{k}$, and the remaining entries are all zeros.
\end{itemize}
By this definition and Lemma~\ref{lem1}, the adjacency matrix of $T_n\langle t, 2t,\ldots, kt\rangle$ is similar to the following matrix:\[
A_1(n,k,t) \oplus A_2(n,k,t) \oplus \cdots \oplus A_t(n,k,t)\]

\begin{figure}
	\begin{center}
		\begin{tikzpicture}
			[scale=0.8,every node/.style={scale=0.8}]
			
			\filldraw (-5,0) circle (2pt);
			\draw (-5,-0.3) node{\small $1$};
			\filldraw (-3,0) circle (2pt);
			\draw (-3,-0.3) node{\small $6$};
			\filldraw (-1,0) circle (2pt);
			\draw (-1,-0.3) node{\small $21$};
			
			\filldraw (-4,2) circle (2pt);
			\draw (-4,2.3) node{\small $11$};
			\filldraw (-2,2) circle (2pt);
			\draw (-2,2.3) node{\small $16$};
			\filldraw (0,1) circle (2pt);
			\draw (0,1.3) node{\small $26$};
			
			\draw (-5,0) -- (-3,0);
			\draw (-5,0) -- (-4,2);
			\draw (-5,0) -- (-2,2);
			\draw (-3,0) -- (-4,2);
			\draw (-3,0) -- (-2,2);
			\draw (-3,0) -- (-1,0);
			\draw (-4,2) -- (-2,2);
			\draw (-4,2) -- (-1,0);
			\draw (-4,2) -- (0,1);
			\draw (-2,2) -- (-1,0);
			\draw (-2,2) -- (0,1);
			\draw (-1,0) -- (0,1);
			
			\filldraw (1,0) circle (2pt);
			\draw (1,-0.3) node{\small $2$};
			\filldraw (3,0) circle (2pt);
			\draw (3,-0.3) node{\small $7$};
			\filldraw (5,0) circle (2pt);
			\draw (5,-0.3) node{\small $22$};
			
			\filldraw (2,2) circle (2pt);
			\draw (2,2.3) node{\small $12$};
			\filldraw (4,2) circle (2pt);
			\draw (4,2.3) node{\small $17$};
			\filldraw (6,1) circle (2pt);
			\draw (6,1.3) node{\small $27$};
			
			\draw (1,0) -- (3,0);
			\draw (1,0) -- (2,2);
			\draw (1,0) -- (4,2);
			\draw (3,0) -- (2,2);
			\draw (3,0) -- (4,2);
			\draw (3,0) -- (5,0);
			\draw (2,2) -- (4,2);
			\draw (2,2) -- (5,0);
			\draw (2,2) -- (6,1);
			\draw (4,2) -- (5,0);
			\draw (4,2) -- (6,1);
			\draw (5,0) -- (6,1);

			\filldraw (-2,-5.5) circle (2pt);
			\draw (-2,-5.8) node{\small $4$};
			\filldraw (0,-5.5) circle (2pt);
			\draw (0,-5.8) node{\small $9$};
			\filldraw (2,-5.5) circle (2pt);
			\draw (2,-5.8) node{\small $24$};
			
			\filldraw (-1,-3.5) circle (2pt);
			\draw (-1,-3.2) node{\small $14$};
			\filldraw (1,-3.5) circle (2pt);
			\draw (1,-3.2) node{\small $19$};
			\filldraw (3,-4.5) circle (2pt);
			\draw (3,-4.2) node{\small $29$};
			
			\draw (-2,-5.5) -- (0,-5.5);
			\draw (-2,-5.5) -- (-1,-3.5);
			\draw (-2,-5.5) -- (1,-3.5);
			\draw (0,-5.5) -- (-1,-3.5);
			\draw (0,-5.5) -- (1,-3.5);
			\draw (0,-5.5) -- (2,-5.5);
			\draw (-1,-3.5) -- (1,-3.5);
			\draw (-1,-3.5) -- (2,-5.5);
			\draw (-1,-3.5) -- (3,-4.5);
			\draw (1,-3.5) -- (2,-5.5);
			\draw (1,-3.5) -- (3,-4.5);
			\draw (2,-5.5) -- (3,-4.5);
			
			\filldraw (3,-3) circle (2pt);
			\draw (3,-3.3) node{\small $3$};
			\filldraw (5,-3) circle (2pt);
			\draw (5,-3.3) node{\small $8$};
			\filldraw (7,-3) circle (2pt);
			\draw (7,-3.3) node{\small $23$};
			
			\filldraw (4,-1) circle (2pt);
			\draw (4,-0.7) node{\small $13$};
			\filldraw (6,-1) circle (2pt);
			\draw (6,-0.7) node{\small $18$};
			\filldraw (8,-2) circle (2pt);
			\draw (8,-1.7) node{\small $28$};
			
			\draw (3,-3) -- (5,-3);
			\draw (3,-3) -- (4,-1);
			\draw (3,-3) -- (6,-1);
			\draw (5,-3) -- (4,-1);
			\draw (5,-3) -- (6,-1);
			\draw (5,-3) -- (7,-3);
			\draw (4,-1) -- (6,-1);
			\draw (4,-1) -- (7,-3);
			\draw (4,-1) -- (8,-2);
			\draw (6,-1) -- (7,-3);
			\draw (6,-1) -- (8,-2);
			\draw (7,-3) -- (8,-2);
			
			\filldraw (-7,-3) circle (2pt);
			\draw (-7,-3.3) node{\small $5$};
			\filldraw (-5,-3) circle (2pt);
			\draw (-5,-3.3) node{\small $10$};
			\filldraw (-3,-3) circle (2pt);
			\draw (-3,-3.3) node{\small $25$};
			
			\filldraw (-6,-1) circle (2pt);
			\draw (-6,-0.7) node{\small $15$};
			\filldraw (-4,-1) circle (2pt);
			\draw (-4,-0.7) node{\small $20$};
			\filldraw (-2,-2) circle (2pt);
			\draw (-2,-1.7) node{\small $30$};
			
			\draw (-7,-3) -- (-5,-3);
			\draw (-7,-3) -- (-6,-1);
			\draw (-7,-3) -- (-4,-1);
			\draw (-5,-3) -- (-6,-1);
			\draw (-5,-3) -- (-4,-1);
			\draw (-5,-3) -- (-3,-3);
			\draw (-6,-1) -- (-4,-1);
			\draw (-6,-1) -- (-3,-3);
			\draw (-6,-1) -- (-2,-2);
			\draw (-4,-1) -- (-3,-3);
			\draw (-4,-1) -- (-2,-2);
			\draw (-3,-3) -- (-2,-2);
		\end{tikzpicture}
	\end{center}
	\caption{The Toeplitz graph $T_{30}\langle5,10,15\rangle$ consisting of five $4$-trees of order $6$}\label{fig:Toeplitz1}
	
\end{figure}

A {\it claw} of a graph means the complete bipartite graph $K_{1,3}$ as an induced subgraph.
In this paper, we denote by  $(a;b,c,d)$ a $K_{1,3}$ as an induced subgraph with center $a$ and leaves $b,c,d$.
Let $G$ be a Toeplitz graph $T_n\langle t_1, \ldots, t_k \rangle$ and $\mathcal{T} = \{t_1,\ldots,t_k\}$.
We note that $a$, $b$, $c$, and $d$ form the claw $(a;b,c,d)$ in $G$ if and only if
\begin{itemize}
\item[(i)] $|a-j| \in \mathcal{T}$ for all $j \in \{b,c,d\}$;
\item[(ii)] $|b-c| \notin \mathcal{T}$;
\item[(iii)] $|b-d| \notin \mathcal{T}$;
\item[(iv)] $|c-d| \notin \mathcal{T}$.
\end{itemize}

If  $a$, $b$, $c$, and $d$ satisfy the conditions (i) (resp. (i)-(ii), (i)-(iii)), then we say that  $a$, $b$, $c$, and $d$ {\it form $[ a;b,c,d ]$ (resp. $[ a;b\nsim c, d ]$, $[ a;b\nsim c \nsim d ]$)}. See Figure~\ref{fig:claws} for an illustration.

By the definition of Toeplitz graph, if there is a $K_{1,3}$ (resp.\ claw) $(a;b,c,d)$ in $T_n\langle t_1, \ldots, t_k \rangle$, then $((n+1)-a; (n+1)-b, (n+1)-c, (n+1)-d)$ is also a $K_{1,3}$ (resp.\ claw).
In this vein, we may consider only $K_{1,3}$ satisfying the following:
\begin{itemize}
\item[($\star$)] at least two of $b,c,d$ are greater than $a$ for $(a;b,c,d)$.
\end{itemize}

The {\it line graph} $G$ of a graph $H$ has the edge set of $H$ as its vertex set and two vertices are adjacent in $G$ if and only if their corresponding edges are adjacent in $H$.

This paper aims to characterize the line graph $T_n\langle t_1,t_2\rangle$ and $T_n\langle t_1,t_2,t_3 \rangle$. (Theorems~\ref{thm:linegraph}, \ref{prop:line2}, and \ref{thm:line3})
It is natural to characterize $T_n\langle t_1,t_2 \rangle$  and $T_n\langle t_1,t_2,t_3 \rangle$ (Theorems~\ref{pro2}, \ref{thm:equiv}, \ref{thm0}, and \ref{thm:cond3}) as a precursor due to the fact that a line graph is claw-free.
By doing so, we add one more equivalent statement to the existing result Lemma~\ref{chordal}.

\begin{figure}
\begin{center}
\begin{tikzpicture}[scale=0.6,x=0.75pt,y=0.75pt,yscale=-1,xscale=1]
\filldraw (173,129) circle (2pt);
\filldraw (251,228) circle (2pt);
\filldraw (100,228) circle (2pt);
\filldraw (173,270) circle (2pt);
\draw    (173,129) -- (251,228) ;
\draw    (173,129) -- (100,228) ;
\draw    (173,129) -- (173,270) ;
\draw[densely dashed]   (100,228) -- (251,228) ;
\draw[densely dashed]    (173,270) -- (100,228) ;
\draw[densely dashed]    (251,228) -- (173,270) ;

\draw (166,107) node [anchor=north west][inner sep=0.75pt]   [align=left] {$a$};
\draw (92,235) node [anchor=north west][inner sep=0.75pt]   [align=left] {$b$};
\draw (167,283) node [anchor=north west][inner sep=0.75pt]   [align=left] {$c$};
\draw (246,234) node [anchor=north west][inner sep=0.75pt]   [align=left] {$d$};
\draw (132,305) node [anchor=north west][inner sep=0.75pt]   [align=left] {$(a;b,c,d)$};
\end{tikzpicture}\quad
\begin{tikzpicture}[scale=0.6,x=0.75pt,y=0.75pt,yscale=-1,xscale=1]
\filldraw (173,129) circle (2pt);
\filldraw (251,228) circle (2pt);
\filldraw (100,228) circle (2pt);
\filldraw (173,270) circle (2pt);
\draw    (173,129) -- (251,228) ;
\draw    (173,129) -- (100,228) ;
\draw    (173,129) -- (173,270) ;

\draw (166,107) node [anchor=north west][inner sep=0.75pt]   [align=left] {$a$};
\draw (92,235) node [anchor=north west][inner sep=0.75pt]   [align=left] {$b$};
\draw (167,283) node [anchor=north west][inner sep=0.75pt]   [align=left] {$c$};
\draw (246,234) node [anchor=north west][inner sep=0.75pt]   [align=left] {$d$};
\draw (132,305) node [anchor=north west][inner sep=0.75pt]   [align=left] {$[a;b, c,d]$};
\end{tikzpicture}\quad
\begin{tikzpicture}[scale=0.6,x=0.75pt,y=0.75pt,yscale=-1,xscale=1]
\filldraw (173,129) circle (2pt);
\filldraw (251,228) circle (2pt);
\filldraw (100,228) circle (2pt);
\filldraw (173,270) circle (2pt);
\draw    (173,129) -- (251,228) ;
\draw    (173,129) -- (100,228) ;
\draw    (173,129) -- (173,270) ;
\draw[densely dashed]    (173,270) -- (100,228) ;

\draw (166,107) node [anchor=north west][inner sep=0.75pt]   [align=left] {$a$};
\draw (92,235) node [anchor=north west][inner sep=0.75pt]   [align=left] {$b$};
\draw (167,283) node [anchor=north west][inner sep=0.75pt]   [align=left] {$c$};
\draw (246,234) node [anchor=north west][inner sep=0.75pt]   [align=left] {$d$};
\draw (127,305) node [anchor=north west][inner sep=0.75pt]   [align=left] {$[a;b\nsim c,d]$};
\end{tikzpicture}\quad
\begin{tikzpicture}[scale=0.6,x=0.75pt,y=0.75pt,yscale=-1,xscale=1]
\filldraw (173,129) circle (2pt);
\filldraw (251,228) circle (2pt);
\filldraw (100,228) circle (2pt);
\filldraw (173,270) circle (2pt);
\draw    (173,129) -- (251,228) ;
\draw    (173,129) -- (100,228) ;
\draw    (173,129) -- (173,270) ;
\draw[densely dashed]    (173,270) -- (100,228) ;
\draw[densely dashed]    (251,228) -- (173,270) ;

\draw (166,107) node [anchor=north west][inner sep=0.75pt]   [align=left] {$a$};
\draw (92,235) node [anchor=north west][inner sep=0.75pt]   [align=left] {$b$};
\draw (167,283) node [anchor=north west][inner sep=0.75pt]   [align=left] {$c$};
\draw (246,234) node [anchor=north west][inner sep=0.75pt]   [align=left] {$d$};
\draw (127,305) node [anchor=north west][inner sep=0.75pt]   [align=left] {$[a;b\nsim c\nsim d]$};
\end{tikzpicture}
\end{center}
\caption{The configurations corresponding to $(a;b,c,d)$, $[a;b,c,d]$, $[a;b\nsim c,d]$, and $[a;b \nsim c \nsim d]$, respectivly. Solid edges represents edges that must be present, and dotted edges indicate that their two endpoints are not adjacent.}\label{fig:claws}
\end{figure}

\section{Equivalence conditions for a Toeplitz graph being claw-free}\label{sec2}

   The section begins with the following question:
 \begin{question}\label{que1}
 Which Toeplitz graphs $T_n\langle t_1, \ldots, t_k\rangle$ are claw-free?
\end{question}

Let $G$ be a Toeplitz graph with $n$ vertices. If $G=T_n\langle
t_1\rangle$ then $G$ is the union of paths by Proposition~\ref{lem1}, which is
 claw-free.
Accordingly, we only consider Toeplitz graphs $T_n\langle t_1,\ldots, t_k\rangle$ for $k \ge 2$.

 \begin{theorem}\label{thm:main1}
  A Toeplitz graph $T_n\langle t,2t, \ldots, kt\rangle$ is claw-free.
 \end{theorem}

\begin{proof}
To reach a contradiction, suppose that $T_n\langle t,2t, \ldots, kt\rangle$ contains a claw $(j;i_1,i_2,i_3)$.
By $(\star)$, we may assume that $i_2>i_1>j$.  Then $i_2=j+rt$ and $i_1=j+st$ for some  $1\le s<r \le k$, so $i_2-i_1=(r-s)t$.
Since $1 \le r-s \le k-1$, $i_1$ and $i_2$ are adjacent and we reach a contradiction.
Thus $T_n\langle t,2t, \ldots, kt\rangle$ is claw-free.
\end{proof}
 The converse of Theorem~\ref{thm:main1} is true if the number of vertices is sufficiently large as shown below.

\begin{lemma}\label{lem:dkd}
	Let $B$ be a nonempty subset of positive integers which is  bounded above and is closed under difference of distinct elements.
	Then $B = \{ d, 2d, \ldots, kd\}$ for some positive integers $d$ and $k$.
\end{lemma}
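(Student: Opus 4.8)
The plan is to single out the two extreme elements of $B$. Since $B$ is a nonempty set of positive integers that is bounded above, both $d := \min B$ and $M := \max B$ exist, with $d \ge 1$. I would then show that $B$ consists of exactly the multiples of $d$ from $d$ up to $M$; once $M$ is known to be a multiple of $d$, writing $M = kd$ yields $B = \{d, 2d, \ldots, kd\}$ as desired.

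First I would prove that every element of $B$ is a multiple of $d$, by strong induction on the element. The base case $b = d$ is immediate. For the inductive step, suppose $b \in B$ with $b > d$. Then $b$ and $d$ are distinct elements of $B$, so closure under difference gives $b - d \in B$; since $0 < b - d < b$, the induction hypothesis makes $b - d$ a multiple of $d$, whence $b = (b-d) + d$ is as well. In particular $M$ is a multiple of $d$, say $M = kd$ for a positive integer $k$, and we have shown $B \subseteq \{d, 2d, 3d, \ldots\}$.

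Next I would establish the reverse containment, that every multiple $jd$ with $1 \le j \le k$ lies in $B$, by downward induction on $j$ starting from $j = k$. The base case $kd = M \in B$ holds by definition of $M$. For the step, assume $jd \in B$ for some $j$ with $2 \le j \le k$; since $j \ge 2$ the elements $jd$ and $d$ are distinct, so closure gives $jd - d = (j-1)d \in B$. Running this down to $j = 1$ shows $d, 2d, \ldots, kd$ are all in $B$, and combined with the previous paragraph we conclude $B = \{d, 2d, \ldots, kd\}$.

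The only point requiring care is the distinctness hypothesis in the closure condition: a difference is guaranteed to lie in $B$ only when its two arguments are distinct, so each subtraction of $d$ must be applied to an element strictly exceeding $d$. Both inductions are arranged so that this holds automatically (in the first, $b > d$; in the second, $j \ge 2$ forces $jd > d$). I do not anticipate a genuine obstacle here—the argument is a clean pair of inductions, and the main thing to verify is the bookkeeping that ensures $d$ is never subtracted from itself.
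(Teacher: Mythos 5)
Your proof is correct and follows essentially the same route as the paper's: isolate the minimum $d$, show every element of $B$ is a multiple of $d$ by repeatedly subtracting $d$ (the paper phrases this via the division algorithm, you via strong induction, but the mechanism is identical), then descend from the maximum $kd$ by subtracting $d$ to recover all of $d, 2d, \ldots, kd$. Your explicit attention to the distinctness hypothesis is a point where you are in fact slightly more careful than the paper's informal ``we continue in this way'' phrasing.
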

\begin{proof}
	By the Well-Ordering Principle, there is a minimum element $d$ in $B$.
	Suppose there is an element $a$ distinct from $d$ in $B$.
	Then $a = qd + r$ for some integers $q$ and $r$, $0 \le r < d$.
	Now $a-d \in B$ by the hypothesis.
	Then $(a-d)-d \in B$.
	We continue in this way to have $a-qd = r \in B$ unless $r = 0$.
	Thus, by the minimality of $d$, $r = 0$.
	Thus every element in $B$ is a multiple of $d$.
	Since $B$ is bounded above, there is a maximum element $b$ in $B$.
	Then $b = kd$ for some positive integer $k$.
	Now $ b- d = (k-1)d \in B$ by the hypothesis.
	Then $(b-d)-d = (k-2)d \in B$.
	We continue in this way to have $B = \{d,2d, \ldots, kd\}$.
\end{proof}

\begin{theorem}\label{thm:tktl}
	Suppose that $T_n \langle t_1, \ldots, t_k\rangle$ is claw-free with $n > t_k+t_\ell$ for some $1 \le \ell < k$.
	Then $t_i = it_1$ for any $1 \le i \le \ell$. Furthermore, if $\ell=k-1$, then $t_k = kt_1$.
\end{theorem}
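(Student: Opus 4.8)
The plan is to reduce the statement to Lemma~\ref{lem:dkd} by showing that the set $B=\{t_1,\ldots,t_\ell\}$ is closed under taking differences of distinct elements. Since $B$ is finite, nonempty, and has $\min B=t_1$, Lemma~\ref{lem:dkd} will then force $B=\{t_1,2t_1,\ldots,\ell t_1\}$, and because $t_1<\cdots<t_\ell$ this gives $t_i=it_1$ for every $1\le i\le\ell$. Writing $D=\{t_1,\ldots,t_k\}$ for the set of differences, the closure I must establish is: for all $1\le a<b\le\ell$ we have $t_b-t_a\in D$. Once this is known, the inequality $t_b-t_a\le t_\ell-t_1<t_\ell$ places $t_b-t_a$ among $\{t_1,\ldots,t_{\ell-1}\}\subseteq B$, which is exactly the closure property required.

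The key step is to use the hypothesis $n>t_k+t_\ell$ to produce a convenient center vertex. I would choose $v$ with $t_\ell+1\le v\le n-t_k$; such a $v$ exists precisely because $n>t_k+t_\ell$, and for this $v$ all of $v+t_1,\ldots,v+t_k$ and all of $v-t_1,\ldots,v-t_\ell$ lie in $[n]$ and are therefore neighbors of $v$. Arguing by contradiction, suppose $t_b-t_a\notin D$ for some $1\le a<b\le\ell$. Then the three vertices $v-t_a$, $v-t_b$, and $v+t_k$ are all neighbors of $v$, and they are pairwise non-adjacent: first, $(v-t_a)-(v-t_b)=t_b-t_a\notin D$ by assumption; second, $(v+t_k)-(v-t_a)=t_k+t_a$ and $(v+t_k)-(v-t_b)=t_k+t_b$ both exceed $t_k$ and hence lie outside $D$. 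Thus $(v;\,v-t_a,\,v-t_b,\,v+t_k)$ is a claw, contradicting claw-freeness. Hence $t_b-t_a\in D$ for all $1\le a<b\le\ell$, and the closure of $B$ follows, after which Lemma~\ref{lem:dkd} finishes the argument.

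The point that does the real work is the non-adjacency of the third leaf $v+t_k$ to the two lower neighbors $v-t_a$ and $v-t_b$; this rests entirely on the observation that the sums $t_k+t_a$ and $t_k+t_b$ exceed $t_k=\max D$, which is also why the extreme index $k$ (rather than some intermediate $t_j$) is the correct choice for that leaf. The hypothesis $n>t_k+t_\ell$ enters in exactly one place, namely to guarantee a center $v$ that simultaneously reaches all the way up to $v+t_k$ and down to $v-t_\ell$. Beyond this I expect only routine bookkeeping: checking that the four vertices $v,\,v-t_a,\,v-t_b,\,v+t_k$ are distinct and lie in $[n]$, and recording that an element of $D$ strictly smaller than $t_\ell$ must be one of $t_1,\ldots,t_{\ell-1}$.
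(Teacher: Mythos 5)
Your proof is correct and is essentially the paper's own argument: both reduce the statement to Lemma~\ref{lem:dkd} by showing $\{t_1,\ldots,t_\ell\}$ is closed under differences, using a potential claw whose third leaf sits at distance $t_k$ from the center so that the two sums $t_k+t_a$, $t_k+t_b$ exceed $t_k$ and force the other two leaves to be adjacent. The only difference is cosmetic: the paper fixes the center $1+t_k$ with leaves $1$, $1+t_k+t_i$, $1+t_k+t_j$, which is just the mirror image (under $x\mapsto n+1-x$) of your configuration $(v;\,v-t_a,\,v-t_b,\,v+t_k)$.
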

\begin{proof}
	Let $G=T_n \langle t_1, \ldots, t_k\rangle$.
	Take two distinct positive integers $i, j \le \ell$.
Then we can check that  $1$, $1+t_k$, $1+t_k+t_i$, and $1+t_k+t_j$ form $[1+t_k;1+t_k+t_i\nsim 1 \nsim 1+t_k+t_j]$.
	 Since $G$ is claw-free, $1+t_k+t_i$ and $1+t_k+t_j$ are adjacent.
	Therefore $|t_i-t_j| \in \{t_1, \ldots, t_k\}$.
	Since $|t_i-t_j| < t_\ell$, $|t_i-t_j| \in \{t_1, \ldots, t_\ell\}$.
	Since we have chosen $i, j$ arbitrarily as long as $i, j \le \ell$, $\{t_1, \ldots, t_\ell \}$ is closed under difference of two distinct elements.
	Thus, by Lemma~\ref{lem:dkd}, $t_i = it_1$ for any $1 \le i \le \ell$.

To show the `furthermore' part, suppose that $\ell=k-1$.
We note that $1$, $1+t_k$,  $1+t_k-t_1$, and $1+t_{k-1}+t_k$ form
$[1+t_k; 1+t_{k-1}+t_k \nsim 1, 1+t_k-t_1]$.
	Since $G$ is claw-free, one of $(1+t_k-t_1)-1 = t_k-t_1$ and $(1+t_{k-1}+t_k)-(1+t_k-t_1)=t_1+t_{k-1}$ belongs to $\{t_1,\ldots,t_k\}$.
	If $t_k-t_1 \in \{t_1,\ldots,t_k\}$, then $t_k-t_1 = t_{k-1}$ since $t_i = it_1$ for each $i \in \{1,\ldots, k-1\}$.
	If $t_1+t_{k-1} \in \{t_1,\ldots,t_k\}$, then $t_k= t_1+t_{k-1}$ since $t_1+t_{k-1} > t_{k-1}$.
	 Therefore $t_k = kt_1$ in both cases and it completes the proof.
\end{proof}

%

 Theorems~\ref{thm:main1} and \ref{thm:tktl}  generalize Theorem~\ref{chordal} as follows:

\begin{theorem}\label{pro2}
	Let $G=T_n\langle t_1,\ldots, t_k\rangle$. If
	$n > t_{k-1}+t_k$, then the following statements are equivalent.
	\begin{itemize}
		\item[{\rm (i)}] $G$ is claw-free;
		\item[{\rm (ii)}] $G$ is interval;
		\item[{\rm (iii)}] $G$ is chordal;
		\item[{\rm (iv)}] $t_i=it_1$ for each $i \in [k]$;
		\item[{\rm (v)}] $\omega(G)=k+1$.
	\end{itemize}
\end{theorem}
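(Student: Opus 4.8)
The plan is to avoid proving any new equivalence from scratch and instead to bootstrap off the results already in hand. Lemma~\ref{chordal} supplies, under the hypothesis $n \ge t_{k-1}+t_k$ (which our strict inequality $n > t_{k-1}+t_k$ certainly implies), the equivalence of four of the five statements here: ``interval'', ``chordal'', the arithmetic condition $t_i = it_1$ for all $i \in [k]$, and $\omega(G) = k+1$. Thus it suffices to (a) recognize that the arithmetic condition $t_i = it_1$ is nothing but a restatement of the cocoonery property (iv), and (b) graft the claw-free condition (i) onto this chain of equivalences.

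For step (a), I would argue the two directions of (iii)$\Leftrightarrow$(iv) directly from the definitions. If $t_i = it_1$ for every $i \in [k]$, then $G = T_n\langle t_1, 2t_1, \ldots, kt_1\rangle$; this is an $(n,k,t_1)$-cocoonery once we verify the defining inequality $n > kt_1$, and indeed $n > t_{k-1}+t_k > t_k = kt_1$, so $G$ is a cocoonery. Conversely, if $G$ is a cocoonery then $G = T_n\langle t, 2t, \ldots, k't\rangle$ for some positive integers $t,k'$; since the generators of a Toeplitz graph are listed in strictly increasing order and are therefore uniquely determined by the graph, we must have $k' = k$ and $t = t_1$, whence $t_i = it_1$ for all $i$. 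This closes (iii)$\Leftrightarrow$(iv), so (ii), (iii), (iv), (v) are mutually equivalent.

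For step (b), the work is already done by the two preceding results. Theorem~\ref{thm:main1} gives (iv)$\Rightarrow$(i) unconditionally, since every cocoonery is claw-free. Corollary~\ref{thm27} gives the reverse implication (i)$\Rightarrow$(iv) precisely under our hypothesis $n > t_{k-1}+t_k$. Hence (i) joins the cycle and all five statements are equivalent. I do not expect any genuine obstacle here: the only points demanding care are purely bookkeeping---checking that the strict hypothesis $n > t_{k-1}+t_k$ both satisfies the weaker hypothesis of Lemma~\ref{chordal} and forces the cocoonery inequality $n > kt_1$, and invoking the uniqueness of Toeplitz generators so that (iv) and the condition $t_i=it_1$ genuinely express the same thing.
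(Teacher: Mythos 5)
Your proposal is correct and follows exactly the route the paper intends: the paper presents Theorem~\ref{pro2} as an immediate consequence of Lemma~\ref{chordal} (whose condition $t_i=it_1$ is the cocoonery property), with Theorem~\ref{thm:main1} supplying (iv)$\Rightarrow$(i) and Corollary~\ref{thm27} supplying (i)$\Rightarrow$(iv). Your bookkeeping about the strict inequality and the identification of (iv) with the arithmetic condition simply makes explicit what the paper leaves implicit.
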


Theorem~\ref{pro2} gives a meaningful characterization for a claw-free Toeplitz graphs.
However, these descriptions are no longer equivalent without constraints on n.
The following theorem shows that the lower bound on $n$ is tight.

\begin{lemma} A Toeplitz graph $T_n\langle t,2t, \ldots,(k-1)t,(k+1)t\rangle$ with $n \le 2kt$  is claw-free.\label{lem:mutation}
\end{lemma}
\begin{proof} Let $G=T_n\langle t,2t, \ldots,(k-1)t,(k+1)t\rangle$ and $B = \{t,2t,\ldots,(k-1)t,(k+1)t\}$.
To reach a contradiction, suppose that $G$ has a claw $(u;x,y,z)$. By $(\star)$, we may assume $u<y<z$.
Then $\{y-u,z-u\} \subset B$. Since $z-y = (z-u)-(y-u)$, $z-y \notin B$ only if $z-u = (k+1)t$ and $y-u = t$.
If $u<x$, then $0< x-u < y-u$ and $x-u \in B$, which is impossible.
Suppose that $u>x$. Then $u-x \in B$.
If $u-x \ge (k-1)t$, then $z-x = (z-u)+(u-x) \ge 2kt$, which is impossible since $n \le 2kt$.
Thus $u-x < (k-1)t$. Since $y-x = (y-u)+(u-x)$, $y-x$ is a multiple of $t$ in $\{t,\ldots,(k-1)t\}$ and we reach a contradiction.
Hence $G$ is claw-free.
\end{proof}

 \begin{theorem} \label{thm:equiv}
 Let $G=T_n\langle t_1,\ldots, t_k\rangle$ be a Toeplitz graph with $k \ge 4$ and $t_{k-1}+t_k = n$. Then $G$ is claw-free if and only if $t_i=it_1$ for each $i \in [k-1]$ and $t_k \in \{kt_1,(k+1)t_1\}$. Furthermore, if $n$ is odd, then $G$ is claw-free if and only if $t_i = it_1$ for each $i \in [k]$.
 \end{theorem}

\begin{proof} The `if' part of the main text and the `furthermore' part
immediately follow from Theorem~\ref{thm:main1} and Lemma~\ref{lem:mutation}.

We show the `only if' part. Since $G:=T_n\langle t_1,\ldots, t_k\rangle$ is claw-free and $t_{k-2}+t_k < t_{k-1} +t_k =n$, \begin{equation}
t_j = jt_1 \ \ \mbox{for each} \ \ 1\le j \le k-2 \label{eq:k2} \end{equation} by Theorem~\ref{thm:tktl}.
For convenience, let $B=\{t_1,\ldots,t_k\}$.

We first claim that $t_{k-1} = (k-1)t_1$.
We  note that $1$, $1+t_{k-2}$, $1+t_{k-2}+t_1$, and $1+t_{k-2}+t_{k-1}$ form
$[1+t_{k-2};1,1+t_{k-2}+t_1,1+t_{k-2}+t_{k-1}]$.
Since $G$ is claw-free, one of $t_{k-2}+t_1$, $t_{k-1}-t_1$, or $t_{k-2}+t_{k-1}$ belongs to $B$.
If $t_{k-1}-t_1 \in B$, then $t_{k-1}-t_1 = t_{k-2}$ and so $t_{k-1} = (k-1)t_1$ by \eqref{eq:k2}.
Consider the case $t_{k-2}+t_1 \in B$. Then $t_{k-2}+t_1$ equals either $t_{k-1}$ or $t_k$.
If $t_{k-2}+t_1 = t_k$, then $t_{k-1}-t_1$ and $t_{k-1}+t_1$ do not belong to $B$, so $(1+t_{k-1};1, 1+t_{k-1}+t_1,1+2t_{k-1})$ is a claw and we reach a contradiction.
Therefore $t_{k-2}+t_1 = t_{k-1}$ and so $t_{k-1} = (k-1)t_1$ by \eqref{eq:k2}.
Now we consider the case $t_{k-2}+t_{k-1} \in B$. Then $t_{k-2} +t_{k-1} = t_k$.
We observe that $1$, $1+t_{k-1}$, $1+2t_{k-1}$, and $1+t_{k-1}-t_{1}$ form
$[1+t_{k-1};1\nsim 1+2t_{k-1} \nsim 1+t_{k-1}-t_{1}]$.
Since $G$ is claw-free, $t_{k-1}-t_1 \in B$ and so $t_{k-1} = (k-1)t_1$ by \eqref{eq:k2}.

Finally we show that $t_k=kt_1$ or $t_k=(k+1)t_1$. We note that $1$, $1+t_k$, $1+ t_k - t_2$, and $1+t_k+t_{k-2}$ form
$[1+t_k;1 \nsim 1+t_k+t_{k-2}, 1+t_k-t_2]$.
Since $G$ is claw-free, one of $(1+t_k-t_2)-1 =t_k-t_2$ and $(1+t_k+t_{k-2}) - (1+t_k-t_2)=t_{k-2}+t_2$ belongs to $B$.
If $t_{k-2}+t_2 \in B$, then $t_k = kt_1$ since $t_j= jt_1$ for each $1 \le j \le k-1$.
Consider the case $t_k-t_2 \in B$. Since $t_j = jt_1$ for each $1 \le j \le k-1$, $t_k-t_2 = t_{k-2}$ implying $t_k=k t_1$ or $t_k-t_2 = t_{k-1}$ implying $t_k=(k+1)t_1$.

If $t_k = (k+1)t_1$, then $n=t_{k-1}+t_k = 2kt_1$ is even. Therefore if $n$ is odd, then $t_k = kt_1$.
\end{proof}

We have completely characterized claw-free Toeplitz graphs $T_n\langle t_1, \ldots, t_k \rangle$ for $n \ge t_k+ t_{k-1}$ until now.
It is natural to ask which Toeplitz graphs are claw-free for $t_k+1 \le n < t_k + t_{k-1}$.
To answer this question, we approach from two directions.
One is to consider a relatively small $k$ and the other is to consider a relatively small $n$.
In the rest of this section, we take the second approach while we concentrate on the first approach in Section~\ref{sec:small}.
We start with $n= t_k+1$.
Each of $T_{t_i+1} \langle t_1, \ldots, t_i\rangle$ is an induced subgraph of $T_{t_k+1} \langle t_1, \ldots, t_k \rangle$ since $t_i+1 \le t_{i+1}$ for $i = 1,2,\ldots, k-1$.
Therefore, if $T_{t_k+1} \langle t_1, \ldots, t_k \rangle$ is claw-free, then $T_{t_i+1} \langle t_1,  \ldots, t_i\rangle$ is claw-free for each $i = 1,2, \ldots, k-1$, and if $T_{t_j+1} \langle t_1, \ldots, t_j\rangle$ is not claw-free for some positive integer $j$, then $T_{t_k+1}\langle t_1,  \ldots, t_k \rangle$ is not claw-free for each integer $k \ge j$.
The following example shows that for the Fibonacci sequence $\{t_i\}_{i=1}^\infty$, even if $T_{t_k+1} \langle t_1, \ldots, t_k \rangle$ is not claw-free for $k \ge 6$, it is claw-free for $k \le 5$.

 \begin{exm}\label{ex1}
 Let  $G_k = T_{F_k+1} \langle F_1,\ldots, F_k \rangle$ for the Fibonacci sequence  $\{F_i\}_{i=1}^\infty$ with $F_1 = 1$, $F_2 = 2$.
 Then $G_5  = T_9\langle 1, 2, 3, 5, 8\rangle$ and $G_6 = T_{14}\langle 1,2,3,5,8,13\rangle$.
It is easy to check that  $(3;1,5,11)$ is a claw of $G_6$. Thus $G_k$ is not claw-free for $k \ge 6$.

Now we show that $G_5$ is claw-free.
To reach a contradiction, suppose that $G_5$ contains a claw $(v_1;v_2,v_3,v_4)$ with $v_2 < v_3 < v_4$.
By $(\star)$, we may assume that $v_1 < v_3 <v_4$.
By the definition of Toeplitz graph, $|v_2-v_1|$, $v_3-v_1$, and $v_4-v_1$ are Fibonacci numbers.

First, we suppose $v_1< v_2$. Then $\{v_2-v_1, v_3-v_1,v_4-v_1\} \subset \{1,2,3,5,8\}$.
Since $(v_i -v_1) - (v_j-v_1) = v_i-v_j$ for $2 \le j < i \le 4$, $(v_1;v_2,v_3,v_4)$ is not a claw if two of $v_2-v_1, v_3-v_1,v_4-v_1$ are consecutive Fibonacci numbers.
 Thus $\{v_2-v_1, v_3-v_1,v_4-v_1\}= \{1,3,8\}$. Then $v_4-v_1=8$, $v_3-v_1 = 3$, and $v_2-v_1=1$. However, $v_3-v_2 = 2$ in this case, so $v_2$ and $v_3$ are adjacent and we reach a contradiction.

Now we suppose $v_1>v_2$. Then $\{v_1-v_2, v_3-v_1,v_4-v_1\} \subset \{1,2,3,5,8\}$.
Note that $v_4-v_2 = (v_4-v_1)+(v_1-v_2)$ and $v_3-v_2 = (v_3-v_1)+(v_1-v_2)$.
Since $G_5$ has $9$ vertices, $v_3-v_2 < v_4-v_2 \le 8$ and so  $\{v_1-v_2, v_3-v_1,v_4-v_1\} \subset \{1,2,3,5\}$.
Since $(v_4-v_1) - (v_3-v_1) = v_4-v_3$ and $v_4-v_3 \notin \{1,2,3,5\}$, $v_4-v_1 = 5$ and $v_3-v_1=1$.
Then, since $(v_4-v_1)+(v_1-v_2) = v_4-v_2 \le 8$, $v_1-v_2 \le 3$.
However, if $v_1-v_2$ is $1$ or $2$, then
$v_3-v_2$ is $2$ or $3$, and if $v_1-v_2 =3$, then $v_4-v_2=8$. Consequently,
 we have reached a contradiction.
Thus $G_5$ is claw-free and so $G_1,\ldots,G_5$ are claw-free.
\end{exm}

In the following, we give a characterization for an infinite sequence $\{t_i\}_{i=1}^\infty$ for which $T_{t_k+1} \langle t_1,  \ldots, t_k \rangle$ is claw-free for any positive integer $k$.
We need the following lemma.

\begin{lemma}\label{thm:tkcond}
Suppose that $T_n \langle t_1, \ldots, t_k \rangle$ is claw-free.
Given an integer $1 \le \ell < k$, if there is some positive integer $\ell \le j < k$ such that $t_{j+1}-t_j > t_\ell$, then $t_i = it_1$ for any $1 \le i \le \ell$.
\end{lemma}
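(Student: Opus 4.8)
The plan is to follow the template of the proof of Theorem~\ref{thm:tktl}, replacing the role of the anchor $t_k$ by $t_j$, the left endpoint of the prescribed gap. As there, the goal is to show that the finite set $\{t_1,\ldots,t_\ell\}$ is closed under taking differences of distinct elements, and then to invoke Lemma~\ref{lem:dkd}. Since the conclusion for $\ell=1$ is merely $t_1=t_1$, I would assume $\ell\ge 2$.

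First I would record the two consequences of the gap hypothesis $t_{j+1}-t_j>t_\ell$. For every $1\le a\le \ell$ we have $t_j< t_j+t_a\le t_j+t_\ell< t_{j+1}$, so $t_j+t_a$ lies strictly between two consecutive entries of the sorted sequence and hence $t_j+t_a\notin\{t_1,\ldots,t_k\}$. Moreover, since $t_{j+1}\le t_k<n$, the same chain gives $t_j+t_\ell<n$, which guarantees that each vertex $1+t_j+t_a$ lies in $[n]$.

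With these in hand, I would fix distinct indices $a,b\le \ell$ and consider the $K_{1,3}$ in $G$ with center $1+t_j$ and leaves $1$, $1+t_j+t_a$, and $1+t_j+t_b$; the center is adjacent to each leaf because the corresponding differences are $t_j$, $t_a$, $t_b$, all in $\{t_1,\ldots,t_k\}$. The difference between $1$ and $1+t_j+t_a$ (resp.\ $1+t_j+t_b$) is $t_j+t_a$ (resp.\ $t_j+t_b$), which is not in $\{t_1,\ldots,t_k\}$ by the gap observation, so vertex $1$ is non-adjacent to both right leaves. Claw-freeness then forces the remaining pair $1+t_j+t_a$, $1+t_j+t_b$ to be adjacent, whence $|t_a-t_b|\in\{t_1,\ldots,t_k\}$. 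As $|t_a-t_b|<t_\ell$, in fact $|t_a-t_b|\in\{t_1,\ldots,t_\ell\}$. Because $a,b$ were arbitrary distinct indices at most $\ell$, the set $\{t_1,\ldots,t_\ell\}$ is closed under difference of distinct elements, so Lemma~\ref{lem:dkd} yields $\{t_1,\ldots,t_\ell\}=\{t_1,2t_1,\ldots,\ell t_1\}$, i.e.\ $t_i=it_1$ for all $1\le i\le \ell$.

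The one point genuinely new relative to Theorem~\ref{thm:tktl}, and the step I expect to be the main obstacle, is that here we no longer assume $n$ is large, so it is not automatic that the right-hand leaves $1+t_j+t_a$ are actual vertices. This is exactly where the hypothesis is needed: the gap $t_{j+1}-t_j>t_\ell$, combined with $t_{j+1}\le t_k<n$, simultaneously places $t_j+t_a$ below $n$ (so the leaves exist) and strictly inside the gap (so vertex $1$ is non-adjacent to them). Thus the single gap condition does the double duty that the inequality $n>t_k+t_\ell$ played in Theorem~\ref{thm:tktl}.
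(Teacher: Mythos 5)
Your proof is correct, but it is not the route the paper takes. You re-run the claw construction from the proof of Theorem~\ref{thm:tktl} directly inside $G$, centering the $K_{1,3}$ at $1+t_j$ and using the gap $t_{j+1}-t_j>t_\ell$ twice: once to place $t_j+t_a$ strictly between the consecutive entries $t_j$ and $t_{j+1}$ (so vertex $1$ is non-adjacent to the right leaves), and once to guarantee $t_j+t_a<t_{j+1}\le t_k<n$ (so the leaves exist); closure of $\{t_1,\ldots,t_\ell\}$ under differences then finishes via Lemma~\ref{lem:dkd}. The paper instead argues by reduction: since every difference of two vertices in $\{1,\ldots,t_{j+1}\}$ is less than $t_{j+1}$, the subgraph of $G$ induced on these vertices is exactly $H=T_{t_{j+1}}\langle t_1,\ldots,t_j\rangle$, hence claw-free; when $j>\ell$ it applies Theorem~\ref{thm:tktl} to $H$ (legitimate because $t_{j+1}>t_j+t_\ell$), and when $j=\ell$ it applies Corollary~\ref{thm27} (because $t_{\ell+1}>2t_\ell$). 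The two arguments are cognate---unwinding the paper's reduction in the case $j>\ell$ produces exactly your claws $(1+t_j;1,1+t_j+t_a,1+t_j+t_b)$---but they trade off differently: the paper's version is four lines because it reuses earlier machinery, at the cost of a case split between $j>\ell$ and $j=\ell$; yours is uniform in $j$, self-contained modulo Lemma~\ref{lem:dkd}, and makes explicit the point the truncation hides, namely that the gap hypothesis does the double duty that the bound $n>t_k+t_\ell$ performed in Theorem~\ref{thm:tktl}.
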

\begin{proof}
The Toeplitz graph $T_{t_{j+1}} \langle t_1, \ldots, t_j \rangle$ is an induced subgraph of $T_n \langle t_1, \ldots, t_k \rangle$, so $T_{t_{j+1}} \langle t_1, \ldots, t_j \rangle$ is also claw-free.
Since $j \ge \ell$, $t_i = it_1$ for any $1 \le i \le \ell$ by Theorem~\ref{thm:tktl}.
\end{proof}

\begin{theorem}
Suppose that $T_{t_n+1} \langle t_1, \ldots, t_n \rangle$ is claw-free for any positive integer $n$.
Then the sequence $\{t_n\}^\infty_{n=1}$ is bounded above by some arithmetic sequence with a common difference $t_\ell$ for some positive integer $\ell$.
\end{theorem}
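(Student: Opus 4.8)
The plan is to measure how long the sequence is forced to coincide with the pure progression $t_i = it_1$, and then to invoke Lemma~\ref{thm:tkcond} to show that once this forced rigidity ends, every subsequent gap is uniformly bounded. Write $g_j = t_{j+1}-t_j$ for the consecutive gaps, and let
\[
L = \sup\{\, m \ge 1 : t_i = it_1 \text{ for all } 1 \le i \le m \,\}.
\]
Since $t_1 = 1\cdot t_1$, this is well defined with $L \ge 1$. First I would dispose of the trivial case $L = \infty$: then $t_n = nt_1$ for every $n$, so $\{t_n\}$ is itself arithmetic with common difference $t_1$ and the conclusion holds with $\ell = 1$. So I assume $L$ is finite, which by maximality means $t_{L+1} \ne (L+1)t_1$.

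The key step is to prove that $g_j \le t_{L+1}$ for every $j \ge L+1$. I would argue by contradiction: suppose some $j \ge L+1$ has $g_j > t_{L+1}$. By hypothesis the prefix $T_{t_{j+1}+1}\langle t_1,\ldots,t_{j+1}\rangle$ is claw-free, so I apply Lemma~\ref{thm:tkcond} to it with $\ell = L+1$ and this index $j$. The required inequalities $1 \le L+1 < j+1$ and $L+1 \le j < j+1$ all hold because $j \ge L+1$, and $t_{j+1}-t_j = g_j > t_{L+1} = t_\ell$. The lemma then yields $t_i = it_1$ for all $1 \le i \le L+1$, in particular $t_{L+1} = (L+1)t_1$, contradicting the maximality of $L$. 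Hence no such $j$ exists, and $g_j \le t_{L+1}$ for all $j \ge L+1$.

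With the gap bound in hand I would telescope. For $n > L+1$,
\[
t_n = t_{L+1} + \sum_{j=L+1}^{n-1} g_j \le t_{L+1} + (n-L-1)\,t_{L+1} = (n-L)\,t_{L+1},
\]
so $t_n$ is dominated by the arithmetic sequence $n \mapsto (n-L)t_{L+1}$, whose common difference is exactly $t_\ell$ with $\ell = L+1$. Only the finitely many initial terms $t_1,\ldots,t_L$ remain uncovered, so I would simply enlarge the additive constant, replacing $(n-L)t_{L+1}$ by $(n-L)t_{L+1}+M$ for a suitable $M \ge 0$; this keeps the common difference $t_\ell$ while bounding the entire sequence $\{t_n\}$ from above. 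This finishes the argument with $\ell = L+1$.

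I expect the only genuine obstacle to be the bookkeeping that makes each application of Lemma~\ref{thm:tkcond} legitimate: one must confirm that every prefix $T_{t_{j+1}+1}\langle t_1,\ldots,t_{j+1}\rangle$ is claw-free (which is immediate from the hypothesis) and that the index inequalities $1 \le \ell < k$ and $\ell \le j < k$ reduce to the single condition $j \ge L+1$. Everything after the gap bound is the short telescoping estimate together with a harmless adjustment of the constant term, so the whole difficulty is concentrated in correctly identifying $L$ and reading off the contradiction from the maximality of $L$.
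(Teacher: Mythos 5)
Your proof is correct and takes essentially the same route as the paper: your $L+1$ is exactly the paper's smallest index $\ell$ with $t_\ell \ne \ell t_1$, the contradiction via Lemma~\ref{thm:tkcond} to get the gap bound $t_{j+1}-t_j \le t_{L+1}$ is the paper's key step, and the telescoping estimate $t_n \le (n-L)t_{L+1}$ is identical. The only cosmetic difference is the choice of dominating sequence: the paper takes $a_n = nt_\ell$ outright and checks the cases $n<\ell$, $n=\ell$, $n>\ell$, while you keep $(n-L)t_{L+1}$ and absorb the finitely many initial terms by adding a constant $M$; both are valid.
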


\begin{proof}
If $\{t_n\}$ is an arithmetic sequence, then it is bounded above by itself.
Now, suppose that $\{t_n\}$ is not arithmetic.
Then there is some positive integer $i$ such that $t_i \neq i t_1$.
Let $\ell$ be the smallest integer such that $t_\ell \neq \ell t_1$.
If $t_{j+1} - t_j > t_\ell$ for some $j \ge \ell$, then $t_\ell = \ell t_1$ by Theorem~\ref{thm:tkcond}, which is a contradiction.
Thus $t_{j+1}- t_j \le t_\ell$ for any $j \ge \ell$.
Therefore $t_n \le (n-\ell+1) t_\ell$ for $n \ge \ell$.
Take an arithmetic sequence $\{a_n\}$ with $a_n = nt_\ell$.
If $ n > \ell$, then $t_n \le (n-\ell+1)t_\ell < n t_\ell = a_n$.
If $ n = \ell$, then $t_\ell < \ell t_\ell =a_\ell$.
If $ n < \ell$, then $t_n = nt_1 < nt_\ell = a_n$.
Hence, $\{t_n\}$ is bounded above by arithmetic sequence $\{a_n\}$.
\end{proof}

\section{Claw-free Toeplitz graphs}
\label{sec:small}

In Section~\ref{sec2}, we suggested two approaches to answer the question which Toeplitz graphs are claw-free for $t_k+1 \le n < t_k + t_{k-1}$. One is to consider a relatively small $k$ and the other is to consider a relatively small $n$.
In this section, we characterize claw-free Toeplitz graphs $T_n\langle t_1,t_2\rangle$ and $T_n\langle t_1,t_2,t_3\rangle$ using the second approach.

We begin with the following lemma, which generalizes Proposition~\ref{lem1}.
Given a Toeplitz graph $G:=T_n \langle t_1,\ldots, t_k \rangle$ and an integer $d \ge 2$,
we denote by $G^d_i$ be the subgraph of $G$ induced by the vertex set \[
\{v \in V(G) \mid v \equiv i \pmod{d}\}\] for each integer $1 \le i \le d$.

 \begin{lemma}[\cite{CFP}]\label{lem:iso}
 Let $G=T_n \langle t_1,\ldots, t_k \rangle$ with $\gcd(t_1,\ldots,t_k)=d$.
  Suppose that $r$ is an integer such that $r \equiv n\pmod{d}$ with $1\le r \le d$.
Then each of $G^d_1, \ldots, G^d_r$ is isomorphic to $T_{\lceil n/d \rceil} \langle t_1/d, \ldots, t_k/d \rangle$ and each of $G^d_{r+1}, \ldots, G^d_d$ is isomorphic to $T_{\lfloor n/d \rfloor} \langle t_1/d, \ldots, t_k/d \rangle$.
Moreover, there is no edge joining a vertex in $G^d_i$ and a vertex in $G^d_j$ if $i \neq j$.
\end{lemma}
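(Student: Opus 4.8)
The plan is to recognize $G^d_i$ as the subgraph of $G$ induced on the residue class $\{v \in [n] : v \equiv i \Mod{d}\}$, following the pattern of Proposition~\ref{lem1} (which is the special case $t_\ell = \ell t_1$, $d = t_1$). I would dispatch the ``moreover'' clause first, since it is immediate: because $d = \gcd(t_1, \ldots, t_k)$ divides every $t_\ell$, any edge $uv$ of $G$ satisfies $|u - v| = t_\ell$ for some $\ell$, hence $u \equiv v \Mod{d}$. Thus no edge can join two distinct residue classes, and $V(G) = [n]$ partitions into the $d$ pairwise non-adjacent pieces $G^d_1, \ldots, G^d_d$.

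Next I would count the vertices of each class and match the count against the floor/ceiling split. Writing $n = qd + r$ with $1 \le r \le d$ (so that $\lceil n/d\rceil = q+1$ and $\lfloor n/d\rfloor = q$ when $r < d$, while both equal $q+1$ when $r = d$), the class $G^d_i$ consists of the vertices $i, i+d, \ldots$ up to $n$, that is, of $\lfloor (n-i)/d\rfloor + 1$ of them. A short computation with $(n-i)/d = q + (r-i)/d$ shows this count equals $\lceil n/d\rceil$ exactly when $1 \le i \le r$ and equals $\lfloor n/d\rfloor$ when $r < i \le d$; this is the one place where the hypothesis $r \equiv n \Mod{d}$, $1 \le r \le d$, is used, and where the boundary case $d \mid n$ (forcing $r = d$) must be handled precisely.

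Finally I would exhibit the isomorphism. Fixing a class with $m$ vertices $i < i+d < \cdots < i+(m-1)d$, I define $\phi(i + sd) = s+1$ for $0 \le s \le m-1$, a bijection onto $[m]$. For $0 \le s < s' \le m-1$, the vertices $i+sd$ and $i+s'd$ are adjacent in $G$ if and only if $(s'-s)d \in \{t_1, \ldots, t_k\}$, i.e. if and only if $s'-s \in \{t_1/d, \ldots, t_k/d\}$; since the images $\phi(i+sd)$ and $\phi(i+s'd)$ differ by $s'-s$, this is exactly the adjacency rule of $T_m\langle t_1/d, \ldots, t_k/d\rangle$. Hence $\phi$ is an isomorphism, and combining it with the vertex count yields $G^d_i \cong T_{\lceil n/d\rceil}\langle t_1/d, \ldots, t_k/d\rangle$ for $i \le r$ and $G^d_i \cong T_{\lfloor n/d\rfloor}\langle t_1/d, \ldots, t_k/d\rangle$ for $i > r$. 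The numbers $t_\ell/d$ are positive integers by the definition of $d$ and remain strictly increasing, so the target notation is legitimate.

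I do not expect any genuine obstacle: the content is elementary, and the only subtlety is the floor/ceiling bookkeeping in the counting step, together with the harmless degenerate case in which a scaled jump $t_k/d$ reaches the vertex count of a smaller class and simply contributes no edges.
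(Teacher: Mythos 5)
The paper offers no proof of this lemma at all---it is imported verbatim from \cite{CFP}---so there is no internal argument to compare yours against; your proposal must stand on its own, and it does. The three steps are all correct: the divisibility of every $t_\ell$ by $d$ immediately gives the ``moreover'' clause, the count $\lfloor (n-i)/d\rfloor+1$ matches $\lceil n/d\rceil$ for $i\le r$ and $\lfloor n/d\rfloor$ for $i>r$ (including the boundary case $d\mid n$, $r=d$, which you handle explicitly), and the scaling bijection $\phi(i+sd)=s+1$ transports adjacency $|u-v|\in\{t_1,\ldots,t_k\}$ exactly to adjacency $|\phi(u)-\phi(v)|\in\{t_1/d,\ldots,t_k/d\}$. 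Your closing remark is also the right reading of the statement's notation: when $d\nmid n$ it can happen that $t_k/d=\lfloor n/d\rfloor$, so the symbol $T_{\lfloor n/d\rfloor}\langle t_1/d,\ldots,t_k/d\rangle$ must be interpreted as allowing entries that exceed all vertex differences and hence contribute no edges; this is a (harmless) imprecision in the lemma as stated, not in your proof.
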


A {\it hole} is a chordless cycle of length at least $4$ as an induced
 subgraph.

\begin{lemma}[\cite{CJKKM}] \label{lem4}
 Let $G=T_n\langle t_1, t_2\rangle$ with $n\ge t_1+t_2$. If $t_2 \ne 2t_1$, then $G$ has a hole of length $(t_1+t_2)/ \gcd(t_1,t_2)$.
 \end{lemma}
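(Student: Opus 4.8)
The plan is to reduce to the case $\gcd(t_1,t_2)=1$ and then recognize the relevant induced subgraph as a circulant cycle. Write $d=\gcd(t_1,t_2)$ and $s_i=t_i/d$, so that $\gcd(s_1,s_2)=1$, $s_1<s_2$, and (since $t_2\ne 2t_1$) $s_2\ne 2s_1$. By Lemma~\ref{lem:iso}, each of the parts $G^d_i$ is isomorphic to $T_{\lceil n/d\rceil}\langle s_1,s_2\rangle$ or to $T_{\lfloor n/d\rfloor}\langle s_1,s_2\rangle$, and there are no edges between distinct parts; hence a hole of length $s_1+s_2=(t_1+t_2)/d$ inside any single part is a hole of $G$ of the same length. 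Since $n\ge t_1+t_2$ and $(t_1+t_2)/d$ is an integer, both $\lceil n/d\rceil$ and $\lfloor n/d\rfloor$ are at least $s_1+s_2$. It therefore suffices to produce a hole of length $s_1+s_2$ in $T_m\langle s_1,s_2\rangle$ whenever $m\ge s_1+s_2$, under the standing assumptions $\gcd(s_1,s_2)=1$, $s_1<s_2$, and $s_2\ne 2s_1$.

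For the coprime case, set $p=s_1+s_2$ and consider the subgraph $H$ of $T_m\langle s_1,s_2\rangle$ induced on $\{1,\dots,p\}$. First I would show that $H$ is isomorphic, via the reduction map $v\mapsto v\bmod p$, to the circulant graph $\mathrm{Cay}(\mathbb{Z}_p,\{\pm s_1\})$. The key observation is that for $u,v\in\{1,\dots,p\}$ the integer difference $u-v$ lies in $\{-(p-1),\dots,p-1\}$, and since $s_2=p-s_1$ one has $|u-v|\in\{s_1,s_2\}$ if and only if $u-v\equiv \pm s_1 \pmod p$. This makes the reduction map an adjacency-preserving bijection in \emph{both} directions, so $H$ is exactly this circulant and in particular carries no chords.

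Next I would invoke the standard fact that $\mathrm{Cay}(\mathbb{Z}_p,\{\pm s_1\})$ is a single cycle of length $p$ precisely when $s_1$ generates $\mathbb{Z}_p$, i.e.\ when $\gcd(s_1,p)=1$; and indeed $\gcd(s_1,p)=\gcd(s_1,s_1+s_2)=\gcd(s_1,s_2)=1$. Thus $H$ is an induced cycle $C_p$. Finally, the hypothesis $s_2\ne 2s_1$ is exactly what guarantees $p\ge 4$: among coprime pairs with $s_1<s_2$ the only one with $s_1+s_2\le 3$ is $(s_1,s_2)=(1,2)$, which is excluded, so $p\ge 4$ and $C_p$ is a genuine hole. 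Transferring this hole back through Lemma~\ref{lem:iso} yields a hole of $G$ of length $p=(t_1+t_2)/\gcd(t_1,t_2)$, as required.

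The one step requiring genuine care is the exact identification of $H$ with the circulant: one must verify that \emph{no additional} edges appear among $\{1,\dots,p\}$ beyond the cycle edges, which is what being an induced subgraph demands and is precisely where the congruence computation together with the bound $|u-v|\le p-1$ is used. The remaining ingredients—the generator criterion for a circulant to be a single cycle, and the elementary enumeration forcing $p\ge 4$—are routine. As a cross-check, one can instead confirm $2$-regularity of $H$ directly by partitioning $\{1,\dots,p\}$ into the ranges $[1,s_1]$, $[s_1+1,s_2]$, $[s_2+1,p]$ and counting neighbors in each range; this reproduces the same cycle but is less transparent about chordlessness than the circulant picture.
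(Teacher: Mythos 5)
Your argument is correct, but note that this paper never proves Lemma~\ref{lem4} at all: it is imported from \cite{CJKKM} as a quoted result, so there is no in-paper proof to compare against. What you have produced is a valid self-contained proof, and it fits naturally into the paper's own toolkit: the reduction to the coprime case via Lemma~\ref{lem:iso} is exactly the device the paper uses in Theorem~\ref{thm:cycles}, and each step of your coprime-case argument checks out. The two-way equivalence $|u-v|\in\{s_1,s_2\} \Leftrightarrow u-v\equiv\pm s_1 \pmod{p}$ for $u,v\in\{1,\dots,p\}$ (using $s_2=p-s_1$ and $|u-v|\le p-1$) does identify the induced subgraph on $\{1,\dots,p\}$ with $\mathrm{Cay}(\mathbb{Z}_p,\{\pm s_1\})$, including the crucial absence of chords; $\gcd(s_1,p)=\gcd(s_1,s_2)=1$ makes this circulant a single $p$-cycle (the degeneracy $2s_1\equiv 0\pmod p$ is impossible since $\gcd(s_1,p)=1$ and $p\ge 3$); and the enumeration showing $s_2\ne 2s_1$ forces $p\ge 4$ is right, so the cycle is a genuine hole, which transfers to $G$ because Lemma~\ref{lem:iso} guarantees the parts are induced and pairwise nonadjacent. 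One structural remark: the paper derives Theorem~\ref{thm:cycles} (the case $n=t_1+t_2$) \emph{from} Lemma~\ref{lem4} together with Lemma~\ref{lem:iso}, whereas your circulant identification proves the coprime hole directly and in fact yields Theorem~\ref{thm:cycles} as an immediate byproduct (when $m=p$ the whole component \emph{is} the cycle), so your route would let the paper dispense with the external citation entirely.
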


The following result describes a useful structure of the Toeplitz graph $T_n\langle t_1, t_2\rangle$ with $n=t_1+t_2$.
 \begin{theorem} \label{thm:cycles} Let $G=T_n\langle t_1, t_2\rangle$. If $n=t_1+t_2$ and $d=gcd(t_1, t_2)$ then
$G$ is isomorphic to the disjoint union of $d$ copies of the holes of order $n/d$,
i.e.
 \begin{equation*} \label{cycle}
 G\cong d\,C_{ n/d }.
 \end{equation*}
 \end{theorem}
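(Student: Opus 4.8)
The plan is to prove the statement by combining Lemma~\ref{lem:iso} with Theorem~\ref{thm:cycles}'s own hypothesis-checking via Lemma~\ref{lem4}. Let $G = T_n\langle t_1, t_2\rangle$ with $n = t_1+t_2$ and $d = \gcd(t_1,t_2)$. First I would dispose of the degenerate case $t_2 = 2t_1$: here $d = t_1$, $n = 3t_1 = 3d$, and $G = T_n\langle t_1, 2t_1\rangle$ is an $(n,2,t_1)$-cocoonery, which by Proposition~\ref{lem1} splits into $t_1 = d$ components each isomorphic to $T_3\langle 1,2\rangle = K_3 = C_3$, so $G \cong d\,C_3 = d\,C_{n/d}$ as claimed. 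Thus I may henceforth assume $t_2 \ne 2t_1$, so that Lemma~\ref{lem4} applies.

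The heart of the argument reduces everything to the primitive case $\gcd = 1$. By Lemma~\ref{lem:iso}, $G$ decomposes into $d$ pairwise non-adjacent pieces, each isomorphic to $T_{m}\langle t_1/d, t_2/d\rangle$ where $m = n/d = (t_1+t_2)/d = t_1/d + t_2/d$ (note $d \mid n$, so there is no floor/ceiling discrepancy—the residue $r$ satisfies $r = d$, making all $d$ pieces isomorphic to the same graph on $\lceil n/d\rceil = n/d$ vertices). Writing $s_1 = t_1/d$ and $s_2 = t_2/d$, we have $\gcd(s_1,s_2) = 1$ and $m = s_1 + s_2$, so it suffices to prove that $H := T_{s_1+s_2}\langle s_1, s_2\rangle$ with $\gcd(s_1,s_2)=1$ and $s_2 \ne 2s_1$ is isomorphic to a single cycle $C_{s_1+s_2}$.

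For the primitive case, I would show $H \cong C_m$ directly. Since $\gcd(s_1,s_2)=1$, Lemma~\ref{lem4} guarantees $H$ contains a hole of length $(s_1+s_2)/1 = m$, i.e. an induced cycle on all $m$ vertices—a Hamiltonian induced cycle. It then remains to verify that $H$ has \emph{no} edges beyond this cycle, equivalently that $H$ is $2$-regular. Each interior vertex $v$ with $s_2 < v$ and $v \le m - s_2$ would naively have up to four neighbours $v \pm s_1, v \pm s_2$, but the constraint $m = s_1 + s_2$ forces the "outer" neighbours to fall outside $[1,m]$ or to coincide; concretely, counting incidences, the number of edges is $(m - t_1) + (m - t_2) = 2m - (s_1+s_2) = 2m - m = m$, and an $m$-vertex graph with $m$ edges containing a spanning induced cycle must be exactly that cycle. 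I expect the main obstacle to be the bookkeeping in this last edge-count: one must argue carefully that Lemma~\ref{lem:iso} yields isomorphic (not merely homomorphic) pieces when $d \mid n$, and that the degree computation showing $2$-regularity is airtight near the boundary vertices $1,\dots,s_1$ and $m-s_1+1,\dots,m$, where only one of the two possible edge-types is present. Once $2$-regularity and connectedness (from the spanning hole) are established, $H \cong C_m$ is immediate, and assembling the $d$ copies completes the proof.
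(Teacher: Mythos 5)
Your proof is correct and follows essentially the same route as the paper: reduce via Lemma~\ref{lem:iso} (using $d \mid n$, so all $d$ pieces are $T_{n/d}\langle t_1/d, t_2/d\rangle$) and then invoke Lemma~\ref{lem4} to identify each piece with $C_{n/d}$. You are in fact somewhat more careful than the paper's own two-line proof, which silently applies Lemma~\ref{lem4} even in the case $t_2=2t_1$ that the lemma excludes (you correctly dispose of it via Proposition~\ref{lem1}) and leaves implicit the step from ``spanning hole on $n/d$ vertices'' to ``equals $C_{n/d}$'', which your edge count $(m-s_1)+(m-s_2)=m$ makes explicit.
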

 \begin{proof} Since $n=t_1+t_2$, $n$ is multiple of $d$.
Then, by Lemma~\ref{lem:iso}, $G\cong d\,T_{ n/d } \langle t_1/d, t_2/d \rangle$.
Now, $T_{ n/d } \langle t_1/d, t_2/d \rangle \cong C_{n/d}$ by Lemma~\ref{lem4} and this completes the proof.\end{proof}

A claw-free Toeplitz graph $T_n \langle t_1,t_2 \rangle$ is characterized as follows.
\begin{theorem}\label{thm0}
 A Toeplitz graph $T_n\langle t_1, t_2\rangle$ is claw-free if and
 only if $n\le t_1+t_2$ or $t_2=2t_1$.
 \end{theorem}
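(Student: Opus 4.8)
The plan is to reduce everything to two elementary observations: a description of the neighbourhood of a single vertex, and the fact that two neighbours of a common vertex can be adjacent only in the exceptional case $t_2=2t_1$. Fix $G=T_n\langle t_1,t_2\rangle$ and note that the neighbours of a vertex $a$ are exactly the members of $\{a-t_2,a-t_1,a+t_1,a+t_2\}$ lying in $[n]$. Computing the pairwise differences of these four candidates, the only differences that can equal $t_1$ or $t_2$ are $t_2-t_1$ (which equals $t_1$ iff $t_2=2t_1$) and $2t_1$ (which equals $t_2$ iff $t_2=2t_1$); every other difference, namely $t_1+t_2$ and $2t_2$, is too large. Hence, when $t_2\neq 2t_1$, any two neighbours of a common vertex are non-adjacent, so in that case $G$ is claw-free if and only if $\Delta(G)\le 2$. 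This single computation is the heart of the argument.

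For the `if' direction I would split into the two stated cases. If $t_2=2t_1$, then $G=T_n\langle t_1,2t_1\rangle$ is an $(n,2,t_1)$-cocoonery (here $n>t_2=2t_1$), so it is claw-free by Theorem~\ref{thm:main1}. If instead $n\le t_1+t_2$, I would show directly that $\Delta(G)\le 2$: if $a+t_2\le n$ then $a\le n-t_2\le t_1$, forcing $a-t_1,a-t_2\notin[n]$, so $a$ has at most the two neighbours $a+t_1,a+t_2$; symmetrically, if $a-t_2\ge 1$ then $a\ge t_2+1$, forcing $a+t_1,a+t_2>n$; and if neither of $a\pm t_2$ lies in $[n]$ the only possible neighbours are $a\pm t_1$. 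In every case $\deg a\le 2$, and a graph of maximum degree at most $2$ contains no $K_{1,3}$, hence is claw-free.

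For the `only if' direction I would argue by contraposition: assuming $n>t_1+t_2$ and $t_2\neq 2t_1$, I exhibit an explicit claw. Taking centre $a=1+t_2$, the three vertices $1=a-t_2$, $1+t_2-t_1=a-t_1$, and $1+t_1+t_2=a+t_1$ all lie in $[n]$ (the last precisely because $n>t_1+t_2$) and are pairwise distinct; by the neighbourhood computation above, since $t_2\neq 2t_1$ their pairwise differences $t_2-t_1$, $t_1+t_2$, and $2t_1$ all avoid $\{t_1,t_2\}$, so they are pairwise non-adjacent. Thus $(1+t_2;\,1,\,1+t_2-t_1,\,1+t_1+t_2)$ is a claw and $G$ is not claw-free. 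The only real subtlety is bookkeeping: making sure the three chosen leaves are genuinely distinct and in range, and recognising that the hypothesis $t_2\neq 2t_1$ is exactly what keeps them independent; there is no deeper obstacle, and one could alternatively settle the boundary case $n=t_1+t_2$ from Theorem~\ref{thm:cycles}, which already identifies $G$ as a disjoint union of cycles and therefore of maximum degree $2$.
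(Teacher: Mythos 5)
Your argument is correct, and its skeleton matches the paper's: both split the ``if'' direction into the case $t_2=2t_1$ (handled by the cocoonery structure) and the case $n\le t_1+t_2$ (handled by showing every vertex has degree at most $2$), and both obtain the ``only if'' direction from the claw $(1+t_2;\,1,\,1+t_2-t_1,\,1+t_1+t_2)$. The difference is in how the ingredients are obtained. The paper cites its earlier machinery: the ``only if'' part is exactly Corollary~\ref{thm27} specialized to $k=2$ (whose proof uses precisely your claw), and the degree bound for $n\le t_1+t_2$ comes from Theorem~\ref{thm:cycles}, which identifies $T_{t_1+t_2}\langle t_1,t_2\rangle$ as a disjoint union of $\gcd(t_1,t_2)$ cycles (itself resting on Lemmas~\ref{lem:iso} and~\ref{lem4}), the subcase $n<t_1+t_2$ then being a union of paths, isolated vertices and possibly cycles. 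You instead re-derive both facts from scratch: the degree bound by the three-case inspection of which of $a\pm t_1,\,a\pm t_2$ can lie in $[n]$ when $n\le t_1+t_2$, and the independence of the claw's leaves by the pairwise-difference computation ($t_2-t_1$, $t_1+t_2$, $2t_1$, $2t_2$ can only hit $\{t_1,t_2\}$ when $t_2=2t_1$). Your route is more elementary and self-contained---it needs only Theorem~\ref{thm:main1} (or Proposition~\ref{lem1}) for the cocoonery case and no gcd-based structural decomposition---while the paper's route is shorter on the page and records the extra structural information (the explicit cycle decomposition at $n=t_1+t_2$) that your counting argument deliberately bypasses. All of your range, distinctness, and adjacency checks are sound, so there is no gap.
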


 \begin{proof}
If $n > t_1+t_2$, then $t_2=2t_1$ by Theorem~\ref{thm:tktl} and so the `only if' part is valid.

To show the `if" part, suppose that $n\le t_1+t_2$ or $t_2=2t_1$.  If $n=t_1+t_2$, by
Theorem~\ref{thm:cycles}, $G$ is isomorphic to union of $d$ cycles of the same
size where $d={\rm gcd}(t_1,t_2)$. Consequently, for $n< t_1+t_2$,
the graph $G$ is isomorphic to union of isolated vertices, paths and
probably $a$ cycles, where $0\le a \le d-1$. Thus the degree of any
vertex in $G$ is not greater than $2$, which implies $G$ is
claw-free. If $n > t_1+t_2$ and $t_2=2t_1$, then $G$ is claw-free from Lemma~\ref{lem:iso}. Hence the proof is complete.
  \end{proof}

Next, we will characterize a claw-free $T_n\langle t_1,t_2,t_3 \rangle$.
We begin with the following theorem.

\begin{theorem} A Toeplitz graph $G:=T_{t_3+1}\langle t_1, t_2,t_3 \rangle$ is claw-free if and only if $t_1+t_2=t_3$ or $t_2 = 2t_1$ or $t_3 = 2t_1$.\label{thm:clawt3}
\end{theorem}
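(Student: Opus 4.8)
The plan is to exploit the defining feature $n = t_3+1$: the difference $t_3$ is realized by exactly one pair of vertices, namely $\{1,n\}$, so $G$ is obtained from $H := T_n\langle t_1,t_2\rangle$ by adding the single edge $\{1,n\}$. This reduces the problem to understanding the claws of the two‑generator graph $H$ (for which Theorem~\ref{thm0} is available) together with a direct inspection of the two vertices $1$ and $n$ that the new edge touches.

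For the \emph{only if} direction I would argue by contrapositive: assuming that none of $t_3=t_1+t_2$, $t_2=2t_1$, $t_3=2t_1$ holds, I produce a claw. The natural first candidate is the star centred at $1$, whose neighbourhood is $\{1+t_1,\,1+t_2,\,n\}$. Computing the three pairwise differences $t_2-t_1$, $t_3-t_1$, $t_3-t_2$ and comparing them with $\{t_1,t_2,t_3\}$ shows that, under our assumptions, the only way two of these vertices can be adjacent — hence the only way this star fails to be a claw — is $t_3=2t_2$. So it remains to treat the single extra case $t_3=2t_2$ (with $t_2\ne 2t_1$), where I would exhibit the claw $(1+t_2;\,1+t_2-t_1,\,1+t_2+t_1,\,n)$; its three leaves have pairwise differences $2t_1$, $t_1+t_2$, $t_2-t_1$, none of which lies in $\{t_1,t_2,t_3=2t_2\}$ once $t_2\ne 2t_1$, so it is genuinely a claw.

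For the \emph{if} direction I would use the ``add one edge'' principle: any claw of $G$ whose centre is not $1$ or $n$ is already an induced claw of $H$ (the new edge is neither a centre--leaf edge nor, since the leaves are pairwise non‑adjacent, a leaf--leaf edge), and no claw of $G$ can have both $1$ and $n$ among its leaves because $1$ and $n$ are adjacent in $G$. The centres $1$ and $n$ are disposed of uniformly: in each of the three cases two of the three neighbours of $1$ are adjacent — namely $1+t_1\sim 1+t_2$ when $t_2=2t_1$, and $1+t_1\sim n$ when $t_3=2t_1$ (difference $t_1$) or $t_3=t_1+t_2$ (difference $t_2$) — so $1$, and by the reflection symmetry $(\star)$ also $n$, is not a claw centre. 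When $t_2=2t_1$, or when $t_3=2t_1$ (so that $n=2t_1+1\le t_1+t_2$), Theorem~\ref{thm0} gives that $H$ itself is claw‑free, and there is nothing left to check.

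The main obstacle is the remaining case $t_3=t_1+t_2$, where $H=T_{t_1+t_2+1}\langle t_1,t_2\rangle$ need not be claw‑free, so I would classify the claws of $H$ directly. The key observation is that $2t_2>t_1+t_2=n-1$, so no vertex can have both of its $t_2$‑neighbours in range; consequently every vertex of $H$ has degree at most $3$ (two $t_1$‑neighbours and at most one $t_2$‑neighbour), and a claw centre must have degree exactly $3$. Solving the range constraints $t_1+1\le a\le t_2+1$ together with the $t_2$‑neighbour condition forces the centre to be $t_1+1$ or $t_2+1$, and in both cases the full neighbourhood — $\{1,2t_1+1,n\}$ and $\{t_2-t_1+1,1,n\}$ respectively — contains both $1$ and $n$. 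Hence every claw of $H$ has $1$ and $n$ among its leaves and is destroyed upon adding the edge $\{1,n\}$, so combined with the centre check above, $G$ is claw‑free. I expect this degree/range bookkeeping to be the only delicate part; everything else amounts to verifying that a fixed short list of differences avoids $\{t_1,t_2,t_3\}$.
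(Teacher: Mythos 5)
Your proposal is correct, but it takes a genuinely different route from the paper's, most notably in the ``if'' direction. The paper proves both directions by direct difference chasing: for the ``only if'' part it splits on $t_1+t_2<t_3$ versus $t_1+t_2>t_3$ and reads off the conditions from the potential claws $(1+t_1;1,1+2t_1,1+t_1+t_2)$ and $(1;1+t_1,1+t_2,1+t_3)$; for the ``if'' part it takes an arbitrary claw $(v_1;v_2,v_3,v_4)$, normalizes it via $(\star)$, and runs a case analysis on which elements of $\{t_1,t_2,t_3\}$ the differences $v_i-v_1$ and $v_i-v_2$ can equal, reaching a contradiction in each case. You instead exploit the structural fact that $n=t_3+1$ makes the difference $t_3$ occur exactly once, so $G$ is $H=T_n\langle t_1,t_2\rangle$ plus the single edge $\{1,n\}$: claws of $G$ centred away from $1$ and $n$ are claws of $H$, the vertices $1$ and $n$ have degree $3$ and are checked by hand, and Theorem~\ref{thm0} disposes of $H$ outright when $t_2=2t_1$ (always) or $t_3=2t_1$ (since then $n=2t_1+1\le t_1+t_2$). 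Your handling of the remaining case $t_3=t_1+t_2$ is sound: no vertex of $H$ can have two $t_2$-neighbours, so claw centres have degree exactly $3$, the range constraints force the centre to be $t_1+1$ or $t_2+1$, and both neighbourhoods $\{1,2t_1+1,n\}$ and $\{1,t_2-t_1+1,n\}$ contain $1$ and $n$, so the added edge kills every claw of $H$. Likewise your ``only if'' argument is complete: the star at $1$ fails to be a claw (under the negation of all three conditions) only when $t_3=2t_2$, and your replacement claw $(1+t_2;1+t_2-t_1,1+t_2+t_1,n)$ is genuine there, since its leaf differences $2t_1$, $t_2-t_1$, $t_1+t_2$ all avoid $\{t_1,t_2,2t_2\}$ when $t_2\ne 2t_1$. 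What your route buys is modularity and an explanation of the mechanism --- the theorem holds because the unique ``long'' edge $\{1,n\}$ is precisely what destroys the claws that $T_n\langle t_1,t_2\rangle$ may have --- and it reuses the $k=2$ classification instead of repeating computations. What the paper's route buys is uniformity and self-containment: a single case analysis treats all three conditions at once without invoking Theorem~\ref{thm0}, a pattern the authors then iterate for Theorem~\ref{thm:cond3}.
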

\begin{proof} To show the `only if' part, suppose that $G$ is claw-free.
If $t_1 + t_2 < t_3$, we consider $H_1 : = [1+t_1;1,1+2t_1,1+t_1+t_2]$, which is a subgraph of $G$ since $1+t_1+t_2 < 1+t_3$.
Otherwise, we consider $H_2:= [1;1+t_1,1+t_2,1+t_3]$.

If $t_1+t_2 < t_3$, then $1$ and $1+t_1+t_2$ are not adjacent since $t_2 < t_1+t_2 < t_3$.
If  $t_1+t_2 > t_3$, then $1+t_2$ and $1+t_3$ are not adjacent since $t_3-t_2 < t_1$.
Therefore, to prevent $H_1$ (resp. $H_2$) from being a claw, at least one of the following is an edge:
$\{1, 1+2t_1\}$, $\{1+2t_1, 1+t_1+t_2\}$ (resp.  $\{1+t_1, 1+t_2\}$,  $\{1+t_1, 1+t_3\}$).
If $1$ and $1+2t_1$ are adjacent, then $t_2 = 2t_1$ since $t_1 <  2t_1 < t_1+t_2 < t_3$.
If $\{1+2t_1, 1+t_1+t_2\}$ or $\{1+t_1, 1+t_2\}$ is an  edge, then $t_2-t_1 = t_1$, i.e.\ $t_2=2t_1$ since $t_2-t_1 <  t_2$.
If $1+t_1$ and $1+t_3$ are adjacent, then $t_3-t_1 = t_1$, i.e.\ $t_3=2t_1$ since $t_3- t_1 < t_2$.
Thus we have shown the `only if' part.

To show the `if' part, suppose that $t_1+t_2 = t_3$ or $t_2 = 2t_1$ or $t_3 = 2t_1$.
To the contrary, suppose that $G$ has a claw $(v_1;v_2,v_3,v_4)$ with $v_2<v_3<v_4$.
By $(\star)$, we may assume $v_1<v_3<v_4$. Then $\{|v_2-v_1|, v_3-v_1, v_4-v_1\} \subset \{t_1,t_2,t_3\}$.

Suppose that $v_4-v_1 = t_3$.
Then $v_4= t_3+1$ and $v_1 = 1$ and so $v_1<v_2$.
Since $v_2-v_1 < v_3-v_1$, we have $v_2=t_1+1$ and $v_3 = t_2+1$.
Therefore \[
\{t_3-t_2, t_3-t_1, t_2-t_1 \} \cap \{t_1, t_2, t_3 \} \neq \emptyset
\]
since $t_1+t_2 = t_3$ or $t_2 = 2t_1$ or $t_3 = 2t_1$.
Thus we have reached a contradiction to the assumption that $(v_1;v_2,v_3,v_4)$ is a claw in each case.

Now we suppose that $v_4 - v_1 \ne t_3$.
Since $v_3-v_1 < v_4-v_1$, $v_3-v_1 = t_1$ and $v_4-v_1=t_2$. Since $v_2-v_1 < v_3-v_1$, $v_2 < v_1$ and so $v_1-v_2 \in \{t_1,t_2,t_3\}$.
If $t_2 = 2t_1$, then $v_4-v_3 = (v_4-v_1)-(v_3-v_1) = t_1$, which is a contradiction.
Thus $t_1+t_2=t_3$ or $t_3 = 2t_1$.
Note that  \[
v_4-v_2 = (v_4-v_1)-(v_2-v_1)=t_2-(v_2-v_1)\]
and
\[
v_3-v_2 = (v_3-v_1)-(v_2-v_1) = t_1 - (v_2-v_1).\]
If $v_1-v_2 = t_3$, then $v_4-v_2 = (v_4-v_1)+(v_1-v_2) = t_2+t_3 >t_3+1$ and we reach a contradiction.
Therefore $v_1 - v_2 \in \{t_1,t_2\}$.
Thus $\{v_4-v_2,v_3-v_2\}$ is either $\{t_1+t_2,2t_1\}$ or $\{2t_2,t_1+t_2\}$.
If $t_1+t_2 = t_3$, then either $v_4-v_2$ or $v_3-v_2$ equals $t_3$, which is a contradiction.
If $t_3= 2t_1$, then either $2t_1< t_1+t_2 = v_4-v_2 \le t_3 $ or $2t_1<2t_2 = v_4-v_2 \le t_3$, which is impossible.
\end{proof}

\begin{theorem}\label{thm:cond3}
Let $G=T_{n}\langle t_1, t_2,t_3 \rangle$ be a Toeplitz graph not in the form $T_n \langle t_1, 2t_1, 3t_1\rangle$.
Then $G$ is claw-free if and only if $n \le t_2+t_3$ and one of the following holds:
\begin{itemize}
\item[{\rm (i)}] $t_3=t_1+t_2$;
\item[{\rm (ii)}] $t_2 = 2t_1$, and $t_3=4t_1$ or $n \le t_1+t_3$;
\item[{\rm (iii)}] $t_3 = 2t_1$ and $n \le \min\{3t_1,2t_2\}$.
\end{itemize}
 \end{theorem}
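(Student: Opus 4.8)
The plan is to prove both implications by first reducing, via the results already established, to the trichotomy $t_1+t_2=t_3$, $t_2=2t_1$, or $t_3=2t_1$, and then pinning down the exact $n$-bound in each branch. For the `only if' direction, suppose $G$ is claw-free. Since $G$ is not a cocoonery, Corollary~\ref{thm27} (applied with $k=3$) immediately forces $n\le t_2+t_3$, for otherwise $G$ would be a cocoonery. Because $t_3<n$, the vertices $1,\dots,t_3+1$ induce a copy of $T_{t_3+1}\langle t_1,t_2,t_3\rangle$, which is again claw-free, so Theorem~\ref{thm:clawt3} yields the trichotomy. For a non-cocoonery these three possibilities are mutually exclusive: $t_1+t_2=t_3$ with $t_2=2t_1$ gives $t_3=3t_1$ (a cocoonery), $t_1+t_2=t_3$ with $t_3=2t_1$ gives $t_2=t_1$, and $t_2=2t_1$ with $t_3=2t_1$ gives $t_2=t_3$, the latter two being impossible. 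It remains to sharpen $n$ in each branch.

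The branch $t_1+t_2=t_3$ needs nothing further, giving (i). For $t_2=2t_1$ (so $t_3\notin\{2t_1,3t_1\}$), I claim that if $t_3\ne 4t_1$ and $n>t_1+t_3$ then $(1+t_1;\,1,\,1+3t_1,\,1+t_1+t_3)$ is a claw: its leaf-differences $3t_1$, $t_3-2t_1$, and $t_1+t_3$ avoid $\{t_1,2t_1,t_3\}$ precisely because $t_3\notin\{3t_1,4t_1\}$, and its largest vertex lies in $[n]$ exactly when $n>t_1+t_3$. This contradiction gives $t_3=4t_1$ or $n\le t_1+t_3=\tfrac{t_2}{2}+t_3$, which is (ii). For $t_3=2t_1$ (so $t_1<t_2<2t_1$), two similar claws finish the job: $(1+t_1;\,1,\,1+t_1+t_2,\,1+3t_1)$ is a claw once $n>3t_1$, and $(1+t_2;\,1,\,1+t_1+t_2,\,1+2t_2)$ is a claw once $n>2t_2$, the relevant differences missing $\{t_1,t_2,2t_1\}$ using only $t_1<t_2<2t_1$. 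Hence $n\le\min\{2t_2,3t_1\}=\min\{2t_2,3t_3/2\}$, which is (iii).

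For the `if' direction, assume $n\le t_2+t_3$ and one of (i)--(iii), and suppose toward a contradiction that $(v_1;v_2,v_3,v_4)$ is a claw; by $(\star)$ take $v_1<v_3<v_4$, so $v_3-v_1,v_4-v_1\in\{t_1,t_2,t_3\}$ and $|v_2-v_1|\in\{t_1,t_2,t_3\}$. The engine of every case is that the surviving configurations have $v_2<v_1$, so the spread satisfies $v_4-v_2\le n-1$, while each hypothesis forces some leaf-difference either to land in $\{t_1,t_2,t_3\}$ (an edge, contradicting independence of the leaves) or to exceed $n-1$. Concretely, under (i) the ordered pairs $(v_3-v_1,v_4-v_1)=(t_1,t_3)$ and $(t_2,t_3)$ die at once since $t_3-t_1=t_2$ and $t_3-t_2=t_1$, while $(t_1,t_2)$ is eliminated by the identity $t_3=t_1+t_2$ and the bound $n\le t_2+t_3$; under (ii) the alternative `$t_3=4t_1$ or $n\le t_1+t_3$' is exactly what kills the one stubborn configuration $v_3-v_1=t_1,\ v_1-v_2=2t_1$, in which $v_4-v_2=t_3+2t_1$; and under (iii) the two bounds $n\le 2t_2$ and $n\le 3t_1$ respectively eliminate the configurations producing $v_4-v_2=2t_2$ and $v_3-v_2=3t_1$ together with their mirror images.

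The main obstacle is the `if' direction. The necessity argument is clean once the trichotomy is in place, but sufficiency demands a disciplined enumeration over the ordered pair $(v_3-v_1,v_4-v_1)$ and over the sign and magnitude of $v_2-v_1$, checking in each surviving subcase that exactly one hypothesis forbids it and that the inequality is applied in the strict form $v_4-v_2\le n-1$ (the endpoints being distinct vertices of $[n]$). Particular care is needed at the boundary values, especially $t_3=4t_1$ in (ii), where the numerical identity $t_3-2t_1=2t_1$ turns a would-be non-edge into an edge and closes a gap that the $n$-bound alone would not. The necessity claws serve as a map here, since each one exhibits precisely the configuration that the matching hypothesis is designed to forbid.
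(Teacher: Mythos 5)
Your proposal is correct. The necessity direction is essentially the paper's own argument: both derive $n\le t_2+t_3$ from the non-cocoonery hypothesis (via Corollary~\ref{thm27}, or equivalently Theorem~\ref{pro2}), both obtain the trichotomy by applying Theorem~\ref{thm:clawt3} to the induced subgraph $T_{t_3+1}\langle t_1,t_2,t_3\rangle$, and your claws coincide with the paper's (with $t_2=2t_1$ your $(1+t_1;1,1+3t_1,1+t_1+t_3)$ \emph{is} the paper's $(1+t_1;1,1+t_1+t_2,1+t_1+t_3)$); in the branch $t_3=2t_1$ your treatment is in fact cleaner, since you extract the two bounds $n\le 3t_3/2$ and $n\le 2t_2$ from two separate claws rather than splitting on which of $t_1+t_3$, $2t_2$ is the minimum as the paper does. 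Where you genuinely diverge is the sufficiency direction: the paper first reduces, by induced-subgraph monotonicity, to five extremal graphs $G_1,\dots,G_5$ (taking $n$ as large as each hypothesis allows), then normalizes to claws containing the vertex $1$ and cases on whether $1$ is the center or which of $1+t_1,1+t_2,1+t_3$ is; you instead keep $n$ general, normalize by $(\star)$, and enumerate the configurations $(v_3-v_1,v_4-v_1)$ together with the sign and value of $v_2-v_1$. Both are exhaustive finite checks; yours invokes each hypothesis exactly where it is needed (the bound $n\le t_2+t_3$ kills $v_1-v_2=t_3$, the alternative in (ii) kills the large spreads, etc.), while the paper's reduction makes the sufficiency claim "check these five graphs" explicit. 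One imprecision to note: under (ii) there is not just one stubborn configuration --- when $t_3\neq 4t_1$, the pair $(v_3-v_1,v_4-v_1)=(t_2,t_3)$ with $v_1-v_2\in\{t_1,t_2\}$ also passes all edge tests and must be killed by the bound $n\le t_1+t_3$, whereas when $t_3=4t_1$ it dies earlier because $t_3-t_2=t_2$ creates an edge between the leaves (exactly the boundary phenomenon you flag). Your enumeration framework disposes of these cases in the same way, so this is an understatement in the sketch rather than a gap.
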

\begin{proof} We first show the `only if' part.
Suppose that $G$ is claw-free. Since $G$ is not in the form $T_n \langle t_1, 2t_1, 3t_1\rangle$, \begin{equation}n \le t_2+ t_3 \label{nt2t3}\end{equation} by Theorem~\ref{pro2}.
To the contrary, suppose that none of (i), (ii), and (iii) holds.
Assume that none of  $t_1+t_2=t_3$, $t_2 = 2t_1$, and $t_3 = 2t_1$ holds. Then $T_{t_3+1} \langle t_1,t_2,t_3 \rangle$ has a claw by Theorem~\ref{thm:clawt3}. For each $n \ge t_3+1$, $T_{t_3+1} \langle t_1,t_2,t_3 \rangle$ is an induced subgraph of $T_n \langle t_1,t_2,t_3 \rangle$ and so $T_n \langle t_1,t_2,t_3 \rangle$ has a claw.
Therefore one of $t_1+t_2=t_3$, $t_2 = 2t_1$, and $t_3 = 2t_1$ is true.

Suppose that $t_2=2t_1$, $t_3\ne 4t_1$, and $n >t_1+ t_3$. Then $(1+t_1;1,1+t_1+t_2,1+t_1+t_3)$ is a claw.
To see why, suppose that $1$ and $1+t_1+t_2$ are adjacent. Then $t_1+t_2 = t_3$ and $t_3 = 3t_1$, which contradicts the assumption.
Suppose that $1+t_1+t_2$ and $1+t_1+t_3$ are adjacent. Since $t_3\ne 2t_2$, $t_3 - t_2 = t_1$. Then $t_3 = 3t_1$, which contradicts assumption $G\ne T_n \langle t_1, 2t_1, 3t_1\rangle$.

Suppose that $t_3 =2t_1$ and $n > \min\{3t_1,2t_2\}$.
Suppose that $t_1+t_3 \le 2t_2$. Then $(1+t_1;1,1+t_1+t_2,1+t_1+t_3)$ is a claw.
To see why, suppose that $1$ and $1+t_1+t_2$ are adjacent. Then $t_1+t_2 = t_3 = 2t_1$ and so $t_2=t_1$, which is a contradiction.
In addition, since $t_3-t_2 < t_3-t_1 =t_1$, $1+t_1+t_2$ and $1+t_1+t_3$ are not adjacent.
Suppose that $t_1+t_3 > 2t_2$.
Note that $2t_2 > t_1+t_2 > 2t_1 =t_3$. In addition, if $t_2-t_1 =t_1$, then $t_2=2t_1=t_3$, which is impossible.

Now we show the `if' part. For $i,j \in \{t_3+1,\ldots, 2t_3\}$, $T_i \langle t_1,t_2,t_3 \rangle$ is an induced subgraph of $T_j \langle t_1,t_2,t_3 \rangle$ if $i <j$.
Since $T_n \langle t_1, 2t_1, 4t_1\rangle $ is claw-free for $n \le 6t_1$ by Theorem~\ref{thm:equiv},
it suffices to show that the following graphs are claw-free:\begin{itemize}
\item $G_1:=T_{t_2+t_3}  \langle t_1,t_2,t_3 \rangle$ with $t_1+t_2 =t_3$;
\item $G_2:=T_{t_1+t_3} \langle t_1,t_2,t_3 \rangle$ with $t_2=2t_1$ and $t_3\ne4t_1$;
\item $G_3:= T_{3t_1} \langle t_1,t_2,t_3 \rangle$ with $t_3 = 2t_1$ and $3t_1 \le 2t_2$;
\item $G_4 := T_{2t_2} \langle t_1,t_2,t_3 \rangle$ with $t_3 = 2t_1$ and $3t_1 \ge 2t_2$.
\end{itemize}
We note that if a Toeplitz graph has a claw, then the graph has a claw containing the vertex $1$.
First we suppose that one of $G_1$, $G_2$, $G_3$, $G_4$ has a claw with the center $1$. Then it is $(1;1+t_1,1+t_2,1+t_3)$. However, we may check that $1+t_2$ and $1+t_3$ are adjacent in $G_1$, $1+t_1$ and $1+t_2$ are adjacent in $G_2$, and $1+t_1$ and $1+t_3$ are adjacent in $G_3$ and $G_4$.  Thus none of  $G_1$, $G_2$, $G_3$, $G_4$ has a claw with the center $1$.

Fix $i \in \{1, 2, 3, 4\}$.
We suppose that the graph $G_i$ has a claw $H_i$ with a pendent vertex $1$.
Then the possible centers of $H_i$ are $1+t_1$, $1+t_2$, and $1+t_3$.
Consequently, the possible pendant vertices are
\begin{itemize}
\item $1, 1+2t_1, 1+t_1+t_2, 1+t_1+t_3$ for the center $1+t_1$;
\item $1, 1-t_1+t_2, 1+t_1+t_2, 1+2t_2$ for the center $1+t_2$;
\item $1, 1-t_2+t_3, 1-t_1+t_3, 1+t_1+t_3$ for the center $1+t_3$.
\end{itemize}
For each possible center, the following table shows that $H_i$ cannot be a claw by either listing two pairs of vertices that are adjacent in $G_i$ or a pair of vertices that are adjacent in $G_i$ and a vertex that cannot be in $G_i$.
For example, $1+t_1+t_3$ cannot be in $G_4$ since $1+t_1+t_3 = 1+3t_3/2 \ge 2t_2 +1 $.
The other cases may be checked in a similar way.
\begin{center}\resizebox{\columnwidth}{!}{
\begin{tabular}{c|c|c|c}

& $1+t_1$ & $1+t_2$ & $1+t_3$ \\
 \hline
$H_1$ & \begin{tabular}{@{}c@{}}\{$1$, $1+t_1+t_2$\},\\ \{$1+2t_1$, $1+t_1+t_3$\}\end{tabular} & \begin{tabular}{@{}c@{}}\{$1$, $1+t_1+t_2$\},\\ \{$1-t_1+t_2$,$1+2t_2$\}\end{tabular} & \begin{tabular}{@{}c@{}}\{$1$, $1+t_3-t_1$\}, \\\{$1+t_3-t_2$, $1+t_3+t_1$\}\end{tabular} \\
\hline
$H_2$ & \begin{tabular}{@{}c@{}}
\{$1$, $1+t_1+t_2$\} \\ $1+t_1+t_3$\end{tabular} & \begin{tabular}{@{}c@{}}\{$1$, $1+2t_1$\},\\ \{$1-t_1+t_2$,$1+t_1+t_2$\}\end{tabular} & \begin{tabular}{@{}c@{}}\{$1-t_2+t_3$, $1-t_1+t_3$\}, \\$1+t_3+t_1$\end{tabular} \\
\hline
$H_3$ & \begin{tabular}{@{}c@{}}\{$1$, $1+2t_1$\},\\  $1+t_1+t_3$\end{tabular} & \begin{tabular}{@{}c@{}}\{$1-t_1+t_2$, $1+t_1+t_2$\},\\ $1+2t_2$\end{tabular} & \begin{tabular}{@{}c@{}}\{$1$, $1+t_3-t_1$\}, \\ $1+t_3+t_1$\end{tabular} \\
\hline
$H_4$ & \begin{tabular}{@{}c@{}}\{$1$, $1+2t_1$\},\\  $1+t_1+t_3$\end{tabular} & \begin{tabular}{@{}c@{}}\{$1-t_1+t_2$, $1+t_1+t_2$\},\\ $1+2t_2$\end{tabular} & \begin{tabular}{@{}c@{}}\{$1$, $1+t_3-t_1$\}, \\ $1+t_3+t_1$\end{tabular} \\
\end{tabular}}
\end{center}
\end{proof}

 \section{Line Toeplitz graphs}\label{sec3}

 Recall that the line graph $L(H)$ of a graph $H$ is the graph
whose vertex set is in one-to-one correspondence with the set of
edges of $H$, where two vertices of $L(H)$ are adjacent if and only
if the corresponding edges in
 $H$ have a vertex in common. Equivalently, a graph $G$ is a line graph if it is isomorphic to the line graph $L(H)$ of some graph $H$. In this case, we write $G=L(H)$
 \cite{Har}.

\begin{figure}
    \centering
    \begin{tikzpicture}[scale=0.6]
        \vertex (c) at (0,0) {};
        \foreach \i in {1,2,3}{
            \vertex (\i) at (\i*120-90:1.8cm) {};
        }
        \draw (1) -- (c) -- (2) -- (c) -- (3);

        \foreach \i in {0,1,2,3}{
            \node (placeholder\i) at (\i*90:2cm) {};
            \draw (0,-2.8) node{$G_1$};
        }
    \end{tikzpicture}
    \begin{tikzpicture}[scale=0.6]
        \vertex (r) at (1.8,0) {};
        \vertex (t) at (-0.9,1.57) {};
        \vertex (b) at (-0.9,-1.57) {};
        \vertex (c) at (0,0) {};
        \vertex (l) at (-1.8,0) {};
        \draw (r) -- (t) -- (l) -- (c) -- (b) -- (r);
        \draw (t) -- (c) -- (l) -- (b);

        \foreach \i in {0,1,2,3}{
            \node (placeholder\i) at (\i*90:2cm) {};
            \draw (0,-2.8)  node{$G_2$};
        }
    \end{tikzpicture}
    \begin{tikzpicture}[scale=0.6]
        \vertex (t) at (0,1.5) {};
        \vertex (l) at (-1.73,-1.5) {};
        \vertex (r) at (1.73,-1.5) {};
        \vertex (ct) at (0,0.5) {};
        \vertex (cb) at (0,-0.5) {};
        \draw (l) -- (t) -- (r) -- (ct) -- (l) -- (cb) -- (r) -- (l);
        \draw (t) -- (ct) -- (cb);

        \foreach \i in {0,1,2,3}{
            \node (placeholder\i) at (\i*90:2cm) {};
            \draw (0,-2.8) node{$G_3$};
        }
    \end{tikzpicture}
    \begin{tikzpicture}[scale=0.6]
        \vertex (t) at (0,1.5) {};
        \vertex (b) at (0,-1.5) {};
        \vertex (l) at (-1.8,0) {};
        \vertex (r) at (1.8,0) {};
        \vertex (lc) at (-0.8,0) {};
        \vertex (rc) at (0.8,0) {};
        \draw (r) -- (rc) -- (t) -- (b) -- (rc);
        \draw (l) -- (t) -- (lc) -- (b) -- (l) -- (lc);

        \foreach \i in {0,1,2,3}{
            \node (placeholder\i) at (\i*90:2cm) {};
             \draw (0,-2.8)  node{$G_4$};
        }
    \end{tikzpicture}
    \begin{tikzpicture}[scale=0.6]
        \vertex (t) at (0,1.5) {};
        \vertex (b) at (0,-1.5) {};
        \vertex (l) at (-1.8,0) {};
        \vertex (r) at (1.8,0) {};
        \vertex (lc) at (-0.8,0) {};
        \vertex (rc) at (0.8,0) {};
        \draw (r) -- (rc) -- (t) -- (b) -- (rc);
        \draw (l) -- (t) -- (lc) -- (b) -- (l) -- (lc);
        \draw (t) -- (r) -- (b);

        \foreach \i in {0,1,2,3}{
            \node (placeholder\i) at (\i*90:2cm) {};
             \draw (0,-2.8)  node{$G_5$};
        }
    \end{tikzpicture}\\
    \begin{tikzpicture}[scale=0.6]
        \vertex (rt) at (1.8,1.57) {};
        \vertex (rb) at (1.8,-1.57) {};
        \vertex (t) at (-0.9,1.57) {};
        \vertex (b) at (-0.9,-1.57) {};
        \vertex (c) at (0,0) {};
        \vertex (l) at (-1.8,0) {};
        \draw (rt) -- (t) -- (l) -- (c) -- (b) -- (rb);
        \draw (t) -- (c) -- (l) -- (b);

        \foreach \i in {0,1,2,3}{
            \node (placeholder\i) at (\i*90:2cm) {};
            \draw (0,-2.8)  node{$G_6$};
        }
    \end{tikzpicture}
    \begin{tikzpicture}[scale=0.6]
        \vertex (rt) at (1.8,1.57) {};
        \vertex (rb) at (1.8,-1.57) {};
        \vertex (t) at (-0.9,1.57) {};
        \vertex (b) at (-0.9,-1.57) {};
        \vertex (c) at (0,0) {};
        \vertex (l) at (-1.8,0) {};
        \draw (rt) -- (t) -- (l) -- (c) -- (b) -- (rb) -- (rt);
        \draw (t) -- (c) -- (l) -- (b);

        \foreach \i in {0,1,2,3}{
            \node (placeholder\i) at (\i*90:2cm) {};
             \draw (0,-2.8)  node{$G_7$};
        }
    \end{tikzpicture}
    \begin{tikzpicture}[scale=0.6]
        \vertex (ct) at (0,.9) {};
        \vertex (cb) at (0,-.9) {};
        \vertex (rt) at (1.8,.9) {};
        \vertex (rb) at (1.8,-.9) {};
        \vertex (lt) at (-1.8,.9) {};
        \vertex (lb) at (-1.8,-.9) {};
        \draw (lt) -- (lb) -- (cb) -- (lt) -- (ct) -- (cb) -- (rb) -- (ct) -- (rt) -- (rb);

        \foreach \i in {0,1,2,3}{
            \node (placeholder\i) at (\i*90:2cm) {};
             \draw (0,-2.8)  node{$G_8$};
        }
    \end{tikzpicture}
    \begin{tikzpicture}[scale=0.6]
        \vertex (c) at (0,0) {};
        \foreach \i in {0,1,2,3,4}{
            \vertex (\i) at (\i*360/5 +90:1.8cm) {};
            \draw (\i) -- (c);
        }
        \draw (0) -- (1) -- (2) -- (3) -- (4) -- (0);

        \foreach \i in {0,1,2,3}{
            \node (placeholder\i) at (\i*90:2cm) {};
             \draw (0,-2.8)  node{$G_9$};
        }
    \end{tikzpicture}
    \caption{Nine forbidden graphs of line graph}\label{fig:line_forbidden}
\end{figure}

Beineke~\cite{LB} characterized connected line graphs in terms of nine forbidden subgraphs drawn in Figure~\ref{fig:line_forbidden}. This result will be effectively used in this section.

We first characterize line Toeplitz graphs $T_n\langle t,2t,\ldots,kt\rangle$.

\begin{theorem}\label{thm:linegraph}
A Toeplitz graph $T_n\langle t,2t,\ldots,kt\rangle$ is a line graph if and only if one of the following is true:
\begin{itemize}
\item[(i)] $k =2$ and $2t+1 \le n \le 5t$;
\item[(ii)] $k \ge 3$ and $kt+1 \le n \le (k+1)t$.
\end{itemize}
\end{theorem}
\begin{proof}
We first show the `only if' part.
The lower bounds for two cases immediately follows from the definition of Toeplitz graphs.
To show the contrapositive of the `only if' part,
let $H_1 = T_n \langle t,2t \rangle$ with $n >5t$ and $H_2 \langle t,2t,\ldots,kt\rangle$ with $k \ge 3$ and $n > (k+1)t$.
Then the subgraph of $H_1$ induced by $\{1,1+t,1+2t,1+3t,1+4t,1+5t\}$ is isomorphic to $G_8$ in Figure~\ref{fig:line_forbidden}. Therefore $H_1$ is not a line graph.
Since $k \ge 3$, $1$, $1+t$, $1+2t$, $1+3t$, $1+(k+1)t$ are all distinct vertices of $H_2$ and the subgraph of $H_2$ induced by $\{1,1+t,1+(k+1)t,1+2t,1+3t\}$ is isomorphic to $G_3$ in Figure~\ref{fig:line_forbidden}. Therefore $H_2$ is not a line graph and so the `only if' part is valid.

Now we show the `if' part.
By Proposition~\ref{lem1}, each component of $T_n\langle t,2t,\ldots,kt\rangle$ is isomorphic to $T_{\lfloor n/t\rfloor+1}\langle 1, 2, \ldots, k\rangle$.
We consider the case (i) in which $G=T_n \langle t,2t \rangle$ with $n \le 5t$.
Then each component of $G$ is isomorphic to one of $T_2\langle 1 \rangle$, $T_3 \langle 1, 2 \rangle$, $T_4 \langle 1, 2 \rangle$, and $T_5 \langle 1, 2 \rangle$.
It is easy to check that none of these graphs contains as an induced subgraph $G_i$ for any $1 \le i \le 9$ given in Figure~\ref{fig:line_forbidden}.
If the case (ii) happens, that is, $G=T_n\langle t,2t,\ldots,kt\rangle$ with $k \ge 3$ and $kt+1 \le n \le (k+1)t$. Then each component of $G$ is isomorphic to $T_k \langle 1, 2, \ldots, k-1 \rangle$ or $T_{k+1} \langle 1,2, \ldots, k \rangle$, which is a clique, and so $G$ is a line graph.
\end{proof}

Since Toeplitz graphs $T_n\langle t,2t,\ldots,kt\rangle$  have been characterized as mentioned above, we will now proceed to explore Toeplitz graphs not in the form $T_n\langle t,2t,\ldots,kt\rangle$.
Especially, we investigate Toeplitz graphs which have a $k$ value of $3$ or less, similar to Section~\ref{sec:small}.

Proposition~\ref{lem1} shows that $T_n\langle t_1\rangle$ is a disjoint union of paths and so it is a line graph.
When $k$ is $2$, it is  characterized simply as follows.
\begin{theorem}
 \label{prop:line2}
Let $G=T_n \langle t_1,t_2 \rangle$ be a Toeplitz graph with $t_2 \ne 2t_1$. Then $G$ is a line graph if and only if $n \le t_1+t_2$.
\end{theorem}
\begin{proof}
Since any line graph is claw-free, the `only if' part immediately comes from Theorem~\ref{thm0}.

Now we show the `if' part. Suppose that $n + \ell = t_1+t_2$ for some nonnegative integer $\ell$.
By Theorem~\ref{thm:cycles}, $T_{n+\ell} \langle t_1 + t_2 \rangle$ is a disjoint union of cycles.
Since the graph $G$ is a subgraph of  $T_{n+\ell} \langle t_1 + t_2 \rangle $, it is a disjoint union of paths and cycles.
Since any paths and cycles are line graphs, $G$ is a line graph.
\end{proof}

From now on, we will explore the case when $k$ is $3$ and fully resolve this case as follows.

\begin{theorem}\label{thm:line3}
Let $G=T_n \langle t_1,t_2,t_3 \rangle$ be a Toeplitz graph not in the form $T_n \langle t_1, 2t_1, 3t_1\rangle$. Then $G$ is a line graph if and only if $n \le t_1+ t_3$ and one of the following holds:
\begin{itemize}
\item[{\rm (i)}] $t_2 = 2t_1$, and either  $t_3 =4t_1$, $3t_1 < n \le \min\{2t_3 -3t_1, 5t_1-1\}$ or $n=5t_1$;
\item[{\rm (ii)}] $t_3 = 2t_1$ and $n \le 2t_2$.
\end{itemize}
\end{theorem}

Proposition~\ref{prop:line2} and Theorem~\ref{thm:line3} give an equivalent condition ($k=2$ and $n \le 4t$ or $k=3$ and $n \le 5t$) for
$T_n \langle t,3t \rangle$ and $T_n \langle t,2t,4t \rangle$.
Yet, it can be extended to any integer $k \ge 2$ by the following proposition.

\begin{prop} A Toeplitz graph $T_n \langle t,2t,\ldots, (k-1)t,(k+1)t \rangle$ with $k \ge 4$ is not a line graph.
\end{prop}
\begin{proof}
Since $k \ge 4$ and $n > (k+1)t$, the vertices $1$, $1+t$, $1+2t$, $1+3t$, and $1+kt$ are mutually distinct.
Consider the subgraph $H$ of $T_n \langle t_1,\ldots, (k-t)t_1,(k+1)t_1 \rangle$ induced by $\{1,1+t,1+kt,1+2t,1+3t\}$. We can easily check that every pair of vertices in $H$ except $1$ and $1+kt$ is adjacent and
$H\cong G_3$ in Figure~\ref{fig:line_forbidden}.
\end{proof}

In the remaining of this section, we will focus on proving Theorem~\ref{thm:line3}.

\begin{lemma}
Let $G=T_n \langle t_1,t_2,t_3 \rangle$ be a Toeplitz graph not in the form $T_n \langle t_1, 2t_1, 3t_1\rangle$. If $G$ is a line graph, then
$t_3 \ne t_1+t_2$.\end{lemma}
\begin{proof}
We show the contrapositive of the statement.
Suppose that $t_3=t_1+t_2$. If $t_2 = 3t_1$, then $t_3 =4t_1$ and so the subgraph of $G$ induced by $\{1, 1+t_1,1+3t_1, 1+4t_1, 1+2t_1\}$ is $G_2$ given in Figure~\ref{fig:line_forbidden}, which is a forbidden subgraph for line graphs.

Suppose that $t_2 \ne 3t_1$. We claim that the subgraph $H$ of $G$ induced by $\{1, 1+t_1,1+t_2, 1+t_1+t_2, 1+2t_1, 1+t_2-t_1\}$ is isomorphic to $G_6$ or $G_7$ given in Figure~\ref{fig:line_forbidden}, which is a forbidden subgraph for line graphs.
Since $t_2 \ne 2t_1$, we have\[
\{t_2-t_1, t_2-2t_1,2t_1 \} \cap  \{t_1,t_2,t_3\} = \emptyset.\]
The difference between two nonadjacent vertices in $H$ belongs to $\{t_2-t_1, t_2-2t_1,2t_1,t_2-3t_1\}$. If $t_2-3t_1 \ne t_1$ and $3t_1-t_2 \ne t_2$, then $H$ is isomorphic to $G_6$. Otherwise, $H$ is isomorphic to $G_7$.
 Hence $G$ is not a line graph and this completes the proof.
\end{proof}

\begin{lemma}
\label{lem:line124}
A Toeplitz graph $T_n \langle t,2t,4t \rangle$ is a line graph if and only if $n \le 5t$.
\end{lemma}
\begin{proof}
Suppose that  $n > 5t$.
Then the subgraph of $G$ induced by $\{1,1+t,1+2t,1+4t,1+5t\}$ is $G_2$ given in Figure~\ref{fig:line_forbidden}, which is a forbidden subgraph for line graphs.

Suppose that $n \le 5t$. By the definition of Toeplitz graphs, $4t < n$ and so $n = 4t + r$ for some positive integer $r \le t$. Then $G$ is a disjoint union of $r$ copies of $T_5 \langle 1,2,4 \rangle$ and $t-r$ copies of $T_4 \langle 1,2 \rangle$ by Lemma~\ref{lem:iso}. We can easily check that the line graphs of $H_1$ and $H_2$ given in Figure~\ref{fig:line} are isomorphic to  $T_5 \langle 1,2,4 \rangle$ and $T_4 \langle 1,2 \rangle$, respectively.
\end{proof}

\begin{figure}
\centering
\begin{tikzpicture}[scale=0.7]
        \vertex (ct) at (.5,.9) {};
        \vertex (cb) at (.5,-.9) {};
        \vertex (lt) at (-1.3,.9) {};
        \vertex (lb) at (-1.3,-.9) {};
        \draw (lt) -- (lb) -- (cb) -- (lt) -- (ct) -- (cb) ;
 \draw (-.2,-2)  node{$H_1$};
        \foreach \i in {0,1,2,3}{
            \node (placeholder\i) at (\i*90:2cm) {};
        }
\end{tikzpicture}
\begin{tikzpicture}[scale=0.7]
        \vertex (ct) at (0,1.4) {};
        \vertex (cb) at (0,0) {};
        \vertex (lt) at (-1,-1) {};
        \vertex (lb) at (1,-1) {};
        \draw (ct) -- (cb) -- (lt) -- (lb) -- (cb) ;
 \draw (0,-2)  node{$H_2$};
        \foreach \i in {0,1,2,3}{
            \node (placeholder\i) at (\i*90:2cm) {};
        }
\end{tikzpicture}
\caption{Graphs $H_1$ and $H_2$ whose line graphs are isomorphic to $T_5 \langle 1,2,4 \rangle$ and $T_4 \langle 1,2 \rangle$, respectively.}\label{fig:line}
\end{figure}

A collection $\mathcal{P}$ of cliques of a graph $G$ is an \emph{edge clique partition} if every edge of $G$ belongs to at least one clique in $\mathcal{P}$ and an edge clique partition $\mathcal{P}$ of a graph $G$ is said to be a \emph{Krausz decomposition} of $G$ provided that every vertex in $G$ belongs to at most two cliques in $\mathcal{P}$.

Given a graph $G$, a proper subgraph $H$ of $G$, and a component $X$ of $H$, we say that an edge $e \in E(G)-E(H)$ {\it is incident to $X$} if one of its ends belongs to $X$.

\begin{lemma}\label{lem:ghline}
Let $G = T_n \langle t, 2t, u \rangle$ and $H = T_n \langle t, 2t \rangle$ be Toeplitz graphs with $2t<u$. If $n \le t+u$, then the following are true:
\begin{itemize}
\item[{\it (a)}] For each $1 \le i \le n-u$, the vertex $i$ is the smallest in a component of $H$ and the vertex $i+u$ is the largest vertex in a component of $H$.
\item[{\it (b)}] No two distinct edges  $\{i,i+u\}$ and $\{j, j+u\}$ are incident to the same vertex.
\item[{\it (c)}] If $n \le 5t$, then $G$ is a line graph if and only if for any component of order $4$ in $H$, there is at most one edge in $E(G)-E(H)$ which is incident to it.
\end{itemize}
\end{lemma}

\begin{proof}
The part (a) is true since $n \le t+u$, and (b) immediately follows from (a).
To show (c), we suppose $n \le 5t$.

To show the `only if' part, we suppose that there is a component of size $4$ in $H$ such that two edges $\{i,i+u\}$ and $\{j,j+u\}$ are incident to $H_j^{t}$.
By (a), the subgraph induced by the set $\{i, i+u\} \cup \{j, j+u\}\cup V(H_j^{t})$ is isomorphic to $G_6$ in Figure~\ref{fig:line_forbidden} and so $G$ is not a line graph.

To show the `if' part, suppose that 
\statement{S1}{for any component of size $4$ in $H$, there is at most one edge in $E(G)-E(H)$ which is incident to the component.}
It suffices to show that $G$ has a Krausz decomposition.

Let $a$ be an integer such that $a \equiv n \pmod {t}$ with $1 \le a \le t$.
Since $\gcd(t, 2t) = t$, it follows from Lemma~\ref{lem:iso} that
$H_i^{t} \cong T_{\lceil n/t \rceil} \langle 1, 2 \rangle$ if $i=1, \ldots, a$ and $H_i^{t} \cong T_{\lfloor n/t \rfloor} \langle 1, 2 \rangle$ if $i = a+1, \ldots, t$.
Therefore every component $H_i^{t}$ is isomorphic to $T_3\langle 1, 2 \rangle$ or $T_4 \langle 1, 2 \rangle$ or $T_5 \langle 1,2 \rangle$.
See Figure~\ref{fig:krausz} for an illustration.

We construct a Krausz decomposition $\mathcal{K}$ of $H$ in the following way.
If $H_i^t \cong  T_3 \langle 1,2 \rangle$, then we take $\{v_1,v_2,v_3\}$ as a clique.
If $H_i^t \cong  T_4 \langle 1,2 \rangle$, then we take cliques  \begin{equation}\label{eq:123}
\{w_1,w_2,w_3\},\{w_2,w_4\},\{w_3,w_4\}\end{equation} or \begin{equation}\label{eq:234}\{w_1,w_2\},\{w_1,w_3\},\{w_2,w_3,w_4\}.\end{equation}
If $H_i^t \cong  T_5 \langle 1,2 \rangle$, then we take cliques $\{x_1,x_2,x_3\},\{x_2,x_4\},\{x_3,x_4,x_5\}$.
 Then we can easily check that  $\mathcal{K}$ is a Krausz decomposition of $H$.

\begin{figure}
\centering
\begin{tikzpicture}[scale=0.7]
        \vertex (1) at (0,.9) {};
        \vertex (2) at (-1,-.9) {};
        \vertex (3) at (1,-.9) {};
        \draw (1) -- (2) -- (3) -- (1) ;
 \draw (0,-2)  node{};
 \draw (0,1.2) node{\scriptsize{$v_1$}};
 \draw (1.4,-.9) node{\scriptsize{$v_2$}};
 \draw (-1.4,-.9) node{\scriptsize{$v_3$}};
        \foreach \i in {0,1,2}{
            \node (placeholder\i) at (\i*90:2cm) {};
        }
\end{tikzpicture}
\begin{tikzpicture}[scale=0.7]
        \vertex (ct) at (.5,.9) {};
        \vertex (cb) at (.5,-.9) {};
        \vertex (lt) at (-1.3,.9) {};
        \vertex (lb) at (-1.3,-.9) {};
        \draw (ct) -- (lb) -- (cb) -- (ct);
        \draw (lt) -- (cb);
        \draw (lt) -- (lb);
 \draw (.9,.9) node{\scriptsize{$w_1$}};
 \draw (.9,-.9) node{\scriptsize{$w_2$}};
 \draw (-1.7,.9) node{\scriptsize{$w_4$}};
 \draw (-1.7,-.9) node{\scriptsize{$w_3$}};
        \foreach \i in {0,1,2,3}{
            \node (placeholder\i) at (\i*90:2cm) {};
        }
\end{tikzpicture}
\begin{tikzpicture}[scale=0.7]
        \vertex (1) at (0,1.4) {};
        \vertex (5) at (-1.2,0.6) {};
        \vertex (2) at (1.2, 0.6) {};
        \vertex (4) at (-0.8,-1) {};
        \vertex (3) at (0.8,-1) {};
        \draw (1) -- (2) -- (3) -- (1) ;
        \draw (5) -- (4) -- (3) -- (5);
        \draw (2) -- (4);
 \draw (0.1,1.7) node{\scriptsize{$x_1$}};
 \draw (1.6,0.6) node{\scriptsize{$x_2$}};
 \draw (-1.6,0.6) node{\scriptsize{$x_5$}};
 \draw (1.2,-1) node{\scriptsize{$x_3$}};
 \draw (-1.2,-1) node{\scriptsize{$x_4$}};
        \foreach \i in {0,1,2,3,4}{
            \node (placeholder\i) at (\i*90:2cm) {};
        }
\end{tikzpicture}
\caption{The graphs $H_i^t$ isomorphic to $T_3 \langle 1,2 \rangle$, $T_4 \langle 1,2 \rangle$, and $T_5 \langle 1,2 \rangle$, respectively, where $v_1<v_2<v_3$; $w_1<w_2<w_3<w_4$; $x_1<x_2<x_3<x_4<x_5$.}\label{fig:krausz}
\end{figure}

Now we add the edges in $E(G)-E(H)$ to $\mathcal{K}$ as cliques to obtain $\mathcal{K'}$. Then we can easily see that $\mathcal{K'}$ is an edge clique cover of $G$.

We will modify $\mathcal{K'}$ to obtain a Krausz decomposition of $G$.
Take a vertex $v$ in $G$.
Suppose that $v$ is a middle vertex, that is, $v \in \{v_2,w_2,w_3,x_2,x_3,x_4\}$. Then, by (a), $v$ is contained in at most two cliques in $\mathcal{K'}$.
Now suppose that $v$ is the smallest or the largest, that is, $v \in \{v_1,v_3,w_1,w_4,x_1,x_5\}$.
If $v \in \{v_1,v_3,x_1,x_5\}$, then $v$ belongs to exactly one clique in $\mathcal{K}$ and so $v$ belongs to at most two cliques in  $\mathcal{K'}$ by (b).

Suppose that $v$ is $w_1$ (resp. $w_4$) and belongs to more than two cliques. Then $v$ is incident to an edge in $E(G)-E(H)$ and the cliques given in \eqref{eq:234} (resp. \eqref{eq:123}) belong to $\mathcal{K}$ and so belong to $\mathcal{K'}$. We replace the cliques \eqref{eq:234} with \eqref{eq:123} (resp. \eqref{eq:123} with \eqref{eq:234})
in $\mathcal{K}$. Then $w_1$ (resp. $w_4$) belongs to exactly one part.
Then $v$ belongs to exactly two cliques in $\mathcal{K'}$.
In addition, $w_4$ (resp. $w_1$) is not incident to any edge in $E(G)-E(H)$ by (S1) and so every vertex in $H_i^{t}$ belongs to exactly two cliques in the new $\mathcal{K'}$.
Thus we have obtained a Krausz decomposition of $G$ and this completes the proof.
\end{proof}

\begin{theorem}\label{thm:line12}
Let $G = T_n \langle t, 2t, u\rangle$ with $2t < u$ and $u \notin \{3t, 4t\}$.
Then $G$ is a line graph if and only if $n \le t+u$ and one of the following holds:
\begin{itemize}
\item[(i)] $3t < n < 4t$ and $n \le 2u-3t$;
\item[(ii)] $4t < n < 5t$ and $n \le 2u - 3t$;
\item[(iii)] $n=5t$.
\end{itemize}
\end{theorem}

\begin{proof}
Let $H = T_n \langle t, 2t \rangle$ and $a$ be an integer such that $a \equiv n \pmod {t}$ with $1 \le a \le t$.
Since $\gcd(t, 2t) = t$, it follows from Lemma~\ref{lem:iso} that
$H_i^{t} \cong T_{\lceil n/t \rceil} \langle 1, 2 \rangle$ if $i=1, \ldots, a$ and $H_i^{t} \cong T_{\lfloor n/t \rfloor} \langle 1, 2 \rangle$ if $i = a+1, \ldots, t$.
There are edges $\{1, 1+u\}, \{2,2+u\}, \ldots, \{n-u, n\}$ in $G$ that are not in $H$.

To show the `only if' part, suppose that $G$ is a line graph.
If $n > t+u$, then, by Theorem~\ref{thm:cond3}, $G$ is not claw-free and so $G$ is not a line graph.
Suppose that $n\le t+u$.
Since $u\neq 3t$ and $u \neq 4t$, $\{u-t, u-2t\} \cap \{t, 2t\} = \emptyset$.
If $u< 3t$, then the subgraph induced by  $\{1, 1+t, 1+2t, 1+3t, 1+u, 1+3t-u\}$ is isomorphic to $G_6$ given in Figure~\ref{fig:line_forbidden} and so $G$ is not a line graph.

Now suppose that $u \ge 3t$ and $G$ is a line graph.
Then there exists a Krausz decomposition $\mathcal{P}$ of $G$.
Let $r = n-u$.
Then $1 \le r \le t$.

Take a vertex $1 \le v \le r$.
Then $N(v) = \{v+t, v+2t, v+u\}$, which induces the subgraph $P_2 \cup I_1$.
Therefore $\{v, v+t, v+2t\}$ and $\{v, v+u\}$ form the maximal cliques containing $v$.
Thus
\begin{equation}\label{eq:partition1}
\{\{v, v+t, v+2t\}, \{v, v+u\}\} \subseteq \mathcal{P}.
\end{equation}
Since $1 \le v \le r$, $N(v+t) = \{v, v+2t, v+3t\}$.
Since $\{v, v+t, v+2t\} \in \mathcal{P}$ and $\mathcal{P}$ is Krausz decomposition,
\begin{equation}\label{eq:partition2}
\{v+t, v+3t\} \in \mathcal{P}.
\end{equation}

Take a vertex $u+1 \le w \le n$.
Then $N(w) = \{w-u, w-2t, w-t\}$, which induces the subgraph $P_2 \cup I_1$.
Therefore
\begin{equation}\label{eq:partition3}
\{\{w, w-t, w-2t\}, \{w, w-u\}\} \subseteq \mathcal{P}.
\end{equation}

If $r+3t > u$, then $\{r+t, r+2t, r+3t\} \in \mathcal{P}$ by \eqref{eq:partition3} and we reach a contradiction since $\{r+t, r+3t\} \in \mathcal{P}$ by \eqref{eq:partition2}.
Therefore
\begin{equation}\label{ineq:1}
u \ge r+3t.
\end{equation}

Suppose that $n > 5t$.
Then $N(1+3t) = \{1+t, 1+2t, 1+4t, 1+5t\}$.
By \eqref{eq:partition2}, $\{1+t, 1+3t\} \in \mathcal{P}$.
Since $\mathcal{P}$ is a Krausz decomposition, $\{1+2t, 1+3t, 1+4t, 1+5t\} \in \mathcal{P}$ and so must be a clique, which is a contradiction.
Therefore
\begin{equation}\label{ineq:2}
n \le 5t.
\end{equation}
By \eqref{ineq:1} and \eqref{ineq:2}, $3 < n/t \le 5$.

Let $3 < n/t \le 4$.
Then for each $i=1, \ldots, a$ there is a bijection $\psi_i:\{i, i+t, i+2t, i+3t\}\rightarrow\{1, 2, 3, 4\}$ given by $v \mapsto \lceil v/t \rceil$ such that
\begin{equation}\label{eq:iso2}
H_i^{t} \stackrel{\psi_i}{\cong}
T_4 \langle 1,2 \rangle,
\end{equation}
and for each $i= a+1, \ldots, t$ there is a bijection $\psi_i:\{i, i+t, i+2t\} \rightarrow \{1, 2, 3\}$ given by $v \mapsto \lceil v/t \rceil$ such that
\begin{equation}\label{eq:iso2-2}
H_i^{t} \stackrel{\psi_i}{\cong}
T_3 \langle 1,2 \rangle.
\end{equation}

Now let $4 < n/t \le 5$.
Then for each $i=1, \ldots, a$ there is a bijection $\phi_i:\{i, i+t,\ldots , i+4t\}\rightarrow\{1, 2, 3, 4, 5\}$ given by $v \mapsto \lceil v/t \rceil$ such that
\begin{equation}\label{eq:iso1}
H_i^{t} \stackrel{\phi_i}{\cong} T_5 \langle 1,2 \rangle,
\end{equation}
and for each $i=a+1, \ldots, t$ there is a bijection $\phi_i: \{i, i+t, i+2t, i+3t\} \rightarrow \{1, 2, 3, 4\}$ given by $v \mapsto \lceil v/t \rceil$ such that
\begin{equation}\label{eq:iso1-2}
H_i^{t} \stackrel{\phi_i}{\cong} T_4 \langle 1,2 \rangle.
\end{equation}

Since $i+u$ is the largest vertex of some component of $H$, $\psi_j$ and $\phi_j$ map $i+u$ to $3, 4$, or $5$ by Lemma~\ref{lem:ghline}(a).
However, by \eqref{ineq:1}, $i+u\ge u \ge r+3t > 3t$.
In addition, $\psi_j$ and $\phi_j$ map $v$ to $\lceil v/t \rceil$.
Therefore $i+u$ cannot be mapped to $3$.

Furthermore,
\begin{equation}\label{ineq:3}
u \equiv a-r \pmod{t}
\end{equation}
since $n=u+r$ and $n\equiv a \pmod {t}$.

{\it Case 1.} $n< 4t$.
Then $H_1^{t}, \ldots, H_a^{t}$ are the components of $H$ with size $4$.
Suppose $a < r$.
Then there is an edge $\{r-a,r-a+u\}$.
By \eqref{ineq:3}, $r-a+u \equiv t \pmod{t}$.
By \eqref{ineq:1}, $3t < u$.
Then, since $u < n < 4t$, $3t < n < 4t$ and so $a \neq t$.
Therefore $H_{t}^{t} \cong T_3 \langle 1, 2 \rangle$ by \eqref{eq:iso2-2}.
However, we note that $\psi_i$ maps $r-a+u$ to either $4$ or $5$, which is a contradiction.
Thus
\begin{equation}\label{ineq:5}
a \ge r.
\end{equation}
To the contrary, suppose $a< 2r$.
Then $2a-2r+1+3t \le a+3t = n$.
By \eqref{ineq:1}, $u = kt + a-r$ for some integer $k$.
By \eqref{ineq:3} and \eqref{ineq:5}, $u \ge 3t$ and $a-r \ge 0$.
Therefore $k<4$.
Since $a \le t$, $u < (k+1)t$ and so $k=3$ by \eqref{ineq:3}.
Thus $u = 3t + a-r$ and so $2a-2r+1+3t = a-r+1+u$.
Therefore $(a-r+1)+u$ and $1$ belong to the same component $H_1^{t}$ of order $4$.
Thus, by Lemma~\ref{lem:ghline}(c), $G$ is not a line graph and we reach a contradiction.

{\it Case 2.} $4t < n < 5t$ and $2r \ge a+t+1$.
Then $H_{a+1}^{t}, \ldots, H_{t}^{t}$ are the components of size $4$.
Moreover, $2r \ge a+1+t \ge 2(a+1)$ and so
\begin{equation}\label{ineq:4}
r \ge a+1.
\end{equation}
By \eqref{ineq:3}, $u = kt + a-r$ for some integer $k$.
By \eqref{ineq:1} and \eqref{ineq:4}, \[3t+ a < 3t+r \le u = kt+a-r.\]
Therefore $k > 3$.
If $k \ge 5$, then $n= r+ u \ge r+ (5t + a-r) = 5t +a$, which is a contradiction.
Thus $k = 4$ and so \[u = 4t + a-r.\]
Now, \[(2r-a-t)+u = (2r-a-t)+(4t+a-r)=r+3t \in V(H_r^{t}).\]
Thus $H_r^{t}$ of order $4$ contains the smaller end $r$ of the edge $\{r,r+u\}$ and the larger end $2r-a-t+u$ of the edge $\{2r-a-t,2r-a-t+u\}$.
Therefore, by Lemma~\ref{lem:ghline}(c), $G$ is not a line graph and we reach a contradiction.

Now we show the `if' part.

{\it Case 1.} $3t < n < 4t$ and $a \ge 2r$.
Then $r < a-r+1$.
Recall that $E(G)-E(H) = \{ 1(1+u), 2(2+u), \ldots, r(r+u)\}$ and $V(H_i^{t}) = \{i, i+t, i+2t, i+3t\}$ for $1 \le i \le a$.
For $1 \le i \le a-r$, \[i+3t_1 \le 3t_1+a-r = t_3\] and so $H_i^{t_1}$ does not have the larger end of any edge in $E(G)-E(H)$.
For $a-r+1 \le i \le a$, $i > r$ since $a-r+1 > r$.
Thus $H_i^{t_1}$ does not have the smaller end of any edge in $E(G)-E(H)$.
Therefore it cannot happen that a component of size $4$ has the smaller end of an edge in $E(G)-E(H)$ and the larger end of an edge in $E(G)-E(H)$.
Hence $G$ is a line graph  by Lemma~\ref{lem:ghline}.

{\it Case 2.}  $2r < a+t_1+1$.
By \eqref{ineq:3}, \[t_3 = kt_1+a-r\] for some integer $k$.
Suppose $r \le a$.
Since $kt_1 \le t_3 < n < 5t_1$, $k<5$.
Moreover, $4t_1 < n=t_3+r = kt_1 + a$.
Therefore $k= 4$ and so $t_3 \ge 4t_1$.
Then, for any $1 \le i \le r$, the larger end $i+t_3$ of the edge $\{i,i+t_3\}$ in $E(G)-E(H)$ is greater than $4t_1$.
Therefore it cannot happen that a component of size $4$ has the larger end of an edge in $E(G)-E(H)$.
Hence $G$ is a line graph  by Lemma~\ref{lem:ghline}.

Suppose $r>a$.
Then, by \eqref{ineq:1}, \[3t_1+ a < 3t_1+r \le t_3 = kt_1+a-r.\]
Therefore $k > 3$.
If $k \ge 5$, then $n= r+ t_3 \ge r+ (5t_1 + a-r) = 5t_1 +a$, which is a contradiction.
Thus $k = 4$ and so $t_3 = 4t_1 + a-r$.

For $a+1 \le i \le a+t_1-r$, $i+3t_1 \le 4t_1+a-r = t_3$ and so $H_i^{t_1}$ does not have the larger end of any edge in $E(G)-E(H)$.
For $a+t_1-r+1 \le i \le t_1$, $i > r$ since $a+t_1-r+1 > r$.
Thus $H_i^{t_1}$ does not have the smaller end of any edge in $E(G)-E(H)$.
Therefore it cannot happen that a component of size $4$ has the smaller end of an edge in $E(G)-E(H)$ and the larger end of an edge in $E(G)-E(H)$.
Hence $G$ is a line graph  by Lemma~\ref{lem:ghline}.

{\it Case 3.} $n= 5t_1$.
We note that every component has size $5$ and so, by Lemma~\ref{lem:ghline}(c), $G$ is a line graph .
\end{proof}

\begin{proof}[Proof of Theorem~\ref{thm:line3}]

By Theorem~\ref{thm:cond3}, $G$ is claw-free if and only if one of the following holds:
\begin{itemize}
\item $t_2 = 2t_1$ and $t_3 = 4t_1$ or $n \le t_1+t_3$;
\item $t_3 = 2t_1$ and $n \le \min \{3t_1, 2t_2 \}$.
\end{itemize}

If $t_2 = 2t_1$ and $t_3 = 4t_1$, then $G$ is a line graph if and only if $n \le 5t_1$ by Lemma~\ref{lem:line124}.

If $t_2= 2t_1$, $t_3 \ne 4t_1$, and $n \le t_1+t_3$, then $G$ is a line graph if and only if
\[
\text{either }3t_1 < n < 5t_1 \text{ and } n \le 2t_3-3t_1 \quad \text{or} \quad n = 5t_1
\]
by Theorem~\ref{thm:line12}.

By the above observation, it remains to show that (iii) is a sufficient condition for $G$ being a line graph.
To show it by contradiction, suppose that $t_3=2t_1$, $n \le \min\{3t_1, 2t_2\}$, and a subset $\{v_1,v_2,v_3,v_4\}$ of $V(G)$ induces a diamond $K_4-v_1v_4$.
By the definition of Toeplitz graphs, the smallest vertex in $\{v_1,v_2,v_3,v_4\}$ is $1$.
By symmetry, we may assume either $v_1=1$ or $v_2=1$.
We consider the case $v_1=1$. Since $\{v_1,v_2,v_3\}$ is a triangle, $\{v_2,v_3\} = \{1+t_1,1+2t_1\}$.
Since $n \le 3t_1$ and $n\le 2t_2$, $N(1+t_1) = \{1,1+2t_1,1+t_1+t_2\}$ and $N(1+2t_1) = \{1,1+t_1,1+2t_1-t_2\}$.
Then $v_4 = 1+t_1+t_2 = 1+2t_1-t_2$ or $t_1=2t_2$, which is impossible.
Now we consider the case $v_2=1$.
Then $N(v_2) = \{1+t_1,1+t_2,1+2t_1\}$ and both of $\{1+t_1, 1+t_2\}$, $\{1+t_2,1+2t_1\}$ are not edge. Therefore we reach a contradiction.
Therefore $G$ is claw-free and diamond-free.
Hence $G$ is a line graph.
\end{proof}

\section{Closing remarks}
Let $G$ be a Toeplitz graph $T_n\langle t_1, \ldots, t_k\rangle$.
In this paper, we gave a necessary and sufficient condition for $G$ being line graph for $k \le 3$. In addition, we showed that if $n \ge t_k+t_{k-1}$ and $G$ is a line graph, then $t_i=it_1$ for each $i \in [k]$.
 We went further to prove that if $k \ge 4$ and $t_i=it_1$ for each $i \in [k]$, then $G$ is a line graph if and only if $kt_1+1 \le n \le (k+1)t_1$.
In summary, we obtained necessary and sufficient conditions for $G$ being line graph when $k \le 3$ or $n \ge t_k+t_{k-1}$. Based on our results, we propose the following problem.
 \begin{prb}
 Characterize all line Toeplitz graphs $G=T_n\langle t_1, \ldots, t_k\rangle$ with $k\ge 4$ and order $1+t_k\le n < t_k+t_{k-1}$.
 \end{prb}

\section{Statements and Declarations}
\subsection{Funding}
This work was supported by the National Research Foundation of Korea (NRF) grant funded by the Korea government (MSIP)
(2016R1A5A1008055, NRF-2019R1A2C1007518, 2021R1C1C2014187 and NRF-2022R1A2C1009648).
\subsection{Competing interest}
The authors have no relevant financial or non-financial interests to disclose.
\subsection{Authorship Principles}
All authors contributed to the study conception and design.

\bibliographystyle{plain}

 \end{document}